\theoremstyle{plain}
\newtheorem{theorem}{Theorem}[section]
\newtheorem{lemma}[theorem]{Lemma}
\theoremstyle{definition}
\newcommand{\eps}{\varepsilon}
\newcommand{\NN}{\mathbb N}
\newcommand{\SSS}{\mathcal S}
\def\e{\varepsilon}
\def\al{\alpha}
\def\phi{\varphi}
\def\NN{{\mathbb N}}
\def\SS{{\mathcal S}}
\newif\ifComplain
\def\complain#1{\ifComplain\ifhmode \newline\fi{\sf *** \ \ #1
\\}\fi}
\newif\ifmarglab
\def\label#1{\@bsphack\ifmarglab\marginpar{LAB:#1}\fi\if@filesw {\let\thepage\relax
   \def\protect{\noexpand\noexpand\noexpand}%
   \edef\@tempa{\write\@auxout{\string
      \newlabel{#1}{{\@currentlabel}{\thepage}}}}%
   \expandafter}\@tempa
   \if@nobreak \ifvmode\nobreak\fi\fi\fi\@esphack}
\long\def\onefigure#1#2{
\begin{figure*}[tbp]
\begin{center}
#1
\end{center}
\caption{#2}
\end{figure*}
} 
\newcommand\newipefig[2]
\begin{document}

\title[Three projections]{Strange products of projections
}

\author[E. Kopeck\'a]{ Eva Kopeck\'a}
\address{Department of Mathematics\\
   University of Innsbruck\\
 A-6020 Innsbruck, Austria}

\email {eva.kopecka@uibk.ac.at}

\author[A. Paszkiewicz]{Adam Paszkiewicz}
  \address{ Faculty of Mathematics and Computer Science\\
   \L\'od\'z University\\
   Banacha 22 \\
   90-238 \L\'od\'z,
   Poland}
\email{adampasz@math.uni.lodz.pl}


\subjclass[2010]{Primary: 46C05}
\keywords{Hilbert space,  projection,  product}

\date{\today}

\begin{abstract}
Let $H$ be an infinite dimensional Hilbert space. We show that there exist three orthogonal projections $X_1, X_2, X_3$ onto closed subspaces of $H$ such that for every $0\ne z_0\in H$ there exist $k_1, k_2,\dots \in \{1,2,3\}$ so that the sequence of iterates defined by $z_n= X_{k_n} z_{n-1}$ does not converge in norm.
\end{abstract}

\maketitle


\section*{Introduction}

Let $K$ be a fixed natural number and let $X_1,X_2,\dots,X_K$ be a family of $K$
closed   subspaces of a Hilbert space  $H$.
 Let $z_0\in H$ and $k_1,k_2,\dots \in \{1,2,\dots,K\}$
be arbitrary. Consider the sequence of vectors $\{z_n\}_{n=1}^{\infty}$ defined by
\begin{equation}\label{iter}
z_{n}=X_{k_n}z_{n-1},
\end{equation}
where $X_k$ denotes also the orthogonal projection of $H$ onto the subspace  $X_k$.
The  sequence  $\{z_n\}$  converges weakly by a theorem of Amemiya and Ando \cite{AA}. If each projection appears in the sequence $\{P_{k_n}\}$ infinitely many times, then this limit is equal to the orthogonal projection of $z_0$   onto  $\bigcap_{i=1}^K X_i$.

If
$K=2$ then the sequence $\{z_n\}$ converges  even in norm according to a classical result of von Neumann \cite{N}. An elementary geometric proof of this theorem can be found in \cite{KR}.

If $K\geq 3$, then additional assumptions are needed to ensure the norm-convergence. That $\{z_n\}$ converges  if $H$ is finite dimensional  was originally proved by Pr\'ager \cite{Pr}; this also follows, of course,   from \cite{AA}.

If $H$ is infinite dimensional, but the sequence
$\{k_n\}$ is periodic, the sequence $\{z_n\}$ converges in norm according
to Halperin \cite{Ha}.
The result was generalized to quasiperiodic sequences by Sakai \cite{S}. Recall that
the sequence $\{k_n\}$ is  quasiperiodic if there exists $r\in\NN$ such that    $\{k_m,k_{m+1},\dots,k_{m+r}\}=\{1,2,\dots,K\}$ for each $m\in \NN$.

The question of norm-convergence if $H$ infinite dimensional, $K\geq 3$ and $\{k_n\}$ arbitrary, posed in \cite{AA}, was open for a long time. In 2012 Paszkiewicz \cite{P} constructed an ingenious example of 5 subspaces
of an infinite dimensional Hilbert space and of a sequence $\{z_n\}$ of the form (\ref{iter}) which does not
converge in norm. An important input towards the construction originates in Hundal's example (\cite{H}, see also \cite{K} and \cite{MR})
of two closed convex subsets of an infinite dimensional Hilbert space and a sequence
of alternating projections onto them which does not converge in norm.

E. Kopeck\'a and V. M\"uller   resolved in \cite{KM}  fully the question of Amemia and Ando. They  refined  Paszkiewicz's construction to get a rather complicated example of  three  subspaces of an infinite dimensional Hilbert space and of a sequence $\{z_n\}$ of the form (\ref{iter}) which does not
converge in norm. 

In Section~\ref{one} of this paper we considerably simplify the construction of \cite{KM}, resulting in Theorem~\ref{main}.  
Moreover, we strengthen the statement on existence of one badly behaved point to the  statement that {\em all points} (except for zero) in a  
Hilbert space $H$ can be badly behaved. Namely, in Theorem~\ref{mainmain}
we show that in every infinite dimensional Hilbert space $H$, there exist three orthogonal projections $X_1, X_2, X_3$ onto closed subspaces of $H$ such that for every $0\neq z_0\in H$ there exist $k_1, k_2,\dots \in \{1,2,3\}$ so that the sequence of iterates defined by $z_n= X_{k_n} z_{n-1}$ does not converge in norm.

If we allow five projections instead of just three, only two sequences of indices are needed to get divergence at every point. In Theorem~\ref{5pieces}
we show  that there exists a sequence $k_1,k_2,\dots\in\{1,2,3\}$ with the following property. For every infinite dimensional Hilbert space $H$
 there are closed subspaces $X,Y,X_1,X_2,X_3$ of $H$ so that if $0\neq z\in H$, and $u_0=Xz$ and $v_0=XYz$, then at least one of
  the sequences of iterates $\{u_n\}_{n=1}^{\infty}$ or $\{v_n\}_{n=1}^{\infty}$ defined by $u_n=X_{k_n}u_{n-1}$ and
 $v_n=X_{k_n}v_{n-1}$   does not converge in  norm.

The paper is organized as follows. 

In Section~\ref{words} we observe  that if 
we have  control over the number of appearances of a projection in a product,
then replacing the projection by another one, close in the norm to the original one, does not change the product much. 

Let $u$ and $v$ be two orthonormal vectors. 
In Section~\ref{one} we construct  two almost identical subspaces $X$ and $Y$
and a product of projections $\psi(u\vee v,X,Y)$ onto the span $u\vee v$ of $u$ and $v$, and the spaces $X$ and $Y$ so that $\psi(u\vee v,X,Y)u\approx v$.
Using the results of Section~\ref{words}, we ``glue" countably many copies of such triples of spaces together.  We thus  obtain three projections and a  divergent sequence    $\{z_n\}$ of iterates (\ref{iter})  which trail very closely along the quartercircles connecting  consecutive vectors of an orthonormal sequence.

In Section~\ref{all} we deduce from  the existence of three subspaces and one 
point the iterates of which do not converge the existence of other three subspaces
such that  for {\em any nonzero point} in the space certain iterates of the projections on these three subspaces do not converge.

\bigskip

{\bf Notation.} In the entire paper  $H$ is a Hilbert space; we will explicitly mention when we need $H$ to be infinite dimensional. Let  $B(H)$ be the space of bounded linear operators from $H$ to $H$.

For a closed subspace $X$ of $H$ we denote by $X$ also the orthogonal projection onto $X$.  For $M,N\subset H$ we denote by $\bigvee M$ the closed linear hull of $M$, and by $M\vee N$   the closed linear hull of $M\cup N$. Similarly, we use $\vee x$ and $x\vee y$ for $x,y\in H$.

For $m\in\NN$ let $\SSS_m$ be the free semigroup with generators $a_1,\dots,a_m$. 
If $\varphi=a_{i_r}\cdots a_{i_1}\in\SSS_m$ (for some $r\in\NN$ and $i_j\in\{1,\dots,m\}$)
 and $A_1,\dots,A_m\in B(H)$, then we write
$\varphi(A_1,\dots,A_m)=A_{i_r}\cdots A_{i_1}\in B(H)$. If $X_1,\dots,X_m$ are closed subspaces of $H$, then $\varphi(X_1,\dots,X_m)=X_{i_r}\cdots X_{i_1}\in B(H)$
is the respective product of orthogonal projections.

Denote by $|\varphi|=r$ the ``length" of the word $\varphi$, and by $|\varphi_i|$ the number of ``occurrences" of $a_i$ in the word $\varphi$. Then $\sum_{i=1}^m |\varphi_i|=|\varphi|=r$.

\section{Continuous dependence of words on the letters}\label{words}

Orthogonal projections are 1-Lipschitz mappings and so are their products. This allows  us to construct our examples ``imprecisely". If  for certain $z_0$ the sequence of iterates $\{z_n\}$ defined by (\ref{iter}) does not converge in norm, the same is true of such sequences of iterates starting from a point $w$   from  a small enough neighborhood of $z$.

 Now assume, we have  control on the number of appearances of  a letter $a$  in the  word $\phi$. If we plug in projections, substituting $A$ for $a$, or instead of $A$ a projection $B$ close enough in norm to $A$,   the resulting product does not change  much.

\begin{lemma}\label{wordcont} 
Let $\psi\in \SSS_n$ for some $n\in \NN$. Assume  $A_i,B_i,E\in  B(H)$, $i\in \{1,2,\dots,n\}$ are  contractions so that each $A_i$ commutes with $E$.   Then
$$
\|\psi(A_1,\dots, A_n)E -\psi(B_1,\dots, B_n)E\| \leq \sum_{1\leq i\leq n} |\psi_i| \ \|A_i E - B_i E\|.
$$
\end{lemma}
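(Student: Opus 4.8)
The plan is to reduce the estimate to a one-letter-at-a-time comparison via a telescoping identity, and then to exploit the commutativity of $E$ with the $A_i$ in order to move a copy of $E$ next to each single difference factor. Write $\psi=a_{i_r}\cdots a_{i_1}$ with $r=|\psi|$, so that $\psi(A_1,\dots,A_n)=A_{i_r}\cdots A_{i_1}$ and likewise for the $B_i$. For $0\le j\le r$ set
$$
T_j=B_{i_r}\cdots B_{i_{j+1}}\,A_{i_j}\cdots A_{i_1},
$$
so that $T_r=\psi(A_1,\dots,A_n)$ and $T_0=\psi(B_1,\dots,B_n)$, and the consecutive differences telescope:
$$
T_j-T_{j-1}=B_{i_r}\cdots B_{i_{j+1}}\,(A_{i_j}-B_{i_j})\,A_{i_{j-1}}\cdots A_{i_1}.
$$
Summing gives $\psi(A_1,\dots,A_n)-\psi(B_1,\dots,B_n)=\sum_{j=1}^r (T_j-T_{j-1})$, and I would then multiply this identity on the right by $E$.

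The crucial step is what happens in each summand after multiplying by $E$. The rightmost block $A_{i_{j-1}}\cdots A_{i_1}$ consists only of factors $A_i$, each of which commutes with $E$; hence $A_{i_{j-1}}\cdots A_{i_1}E=E\,A_{i_{j-1}}\cdots A_{i_1}$, which brings an $E$ immediately to the right of the single difference $A_{i_j}-B_{i_j}$ and produces the factor $(A_{i_j}-B_{i_j})E=A_{i_j}E-B_{i_j}E$. The $j$-th summand then takes the form
$$
B_{i_r}\cdots B_{i_{j+1}}\,(A_{i_j}E-B_{i_j}E)\,A_{i_{j-1}}\cdots A_{i_1}.
$$
Here the left block $B_{i_r}\cdots B_{i_{j+1}}$ and the remaining right block $A_{i_{j-1}}\cdots A_{i_1}$ are products of contractions, hence contractions, so the norm of this summand is at most $\|A_{i_j}E-B_{i_j}E\|$. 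Summing over $j$ yields $\|\psi(A_1,\dots,A_n)E-\psi(B_1,\dots,B_n)E\|\le\sum_{j=1}^r\|A_{i_j}E-B_{i_j}E\|$, and collecting equal indices (the value $i$ occurs exactly $|\psi_i|$ times) gives the claimed bound. An induction on $r$ peeling off the rightmost letter $a_{i_1}$, using $A_{i_1}E=EA_{i_1}$ at the inductive step, is an equivalent route.

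The main obstacle is purely the bookkeeping of where $E$ sits. A naive telescoping (say, with $A$'s on the left of the difference and $B$'s on its right) leaves $E$ separated from the difference factor by a block of $B_i$'s, which need not commute with $E$; one could then only bound each term by $\|A_i-B_i\|$ and would lose the refinement. The whole point of choosing the telescoping order above — all $B$'s to the left of the difference, all $A$'s to its right — is to guarantee that everything standing between the difference factor and the trailing $E$ is a product of $A_i$'s, so that the commutation hypothesis applies and the estimate can be stated in terms of the smaller quantities $\|A_iE-B_iE\|$.
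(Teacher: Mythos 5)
Your proof is correct and is essentially the paper's argument in unrolled form: the paper establishes the same bound by induction on $|\psi|$, peeling off the rightmost letter $a_j$ and using $A_jE=EA_j$ together with $\|A_j\|\le 1$ and $\|\phi(B_1,\dots,B_n)\|\le 1$ at each step, which is exactly the telescoping you write out explicitly (and which you yourself identify as an equivalent route). No gaps.
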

\begin{proof} We proceed by induction on $r =|\psi|$. For $r =0$ the statement is obvious as $\psi(A_1,\dots, A_n)=\psi(B_1,\dots, B_n)$ is the identity mapping on $H$. Let the statement  be valid upto some  $r$ and let $|\psi| = r+1$. Then $\psi =\phi a_j$ for some $\phi \in \SSS_n$ with $|\phi| =r$ and $j\in \{1,\dots, n\}$. Hence
\begin{equation}\notag
\begin{split}
\|&\psi(A_1,\dots, A_n)E -\psi(B_1,\dots, B_n)E\| \\
=&\|\phi(A_1,\dots, A_n)A_jE -\phi(B_1,\dots, B_n)B_jE\| \\
\leq& \|\phi(A_1,\dots, A_n)A_jE -\phi(B_1,\dots, B_n)A_jE\| \\
&+
\|\phi(B_1,\dots, B_n)A_jE -\phi(B_1,\dots, B_n)B_jE\| \\
\leq& \|\phi(A_1,\dots, A_n)E -\phi(B_1,\dots,B_n)E\|
+\|\phi(B_1,\dots, B_n)\|\cdot\|A_jE - B_jE\|\\
\leq& \|A_jE - B_jE\|+\sum_{1\leq i\leq n} |\phi_i| \ \|A_iE - B_i E\|\\
\leq &\sum_{1\leq i\leq n} |\psi_i| \ \|A_i E - B_i E\|,
\end{split}
\end{equation}
by induction, and since $A_jE=EA_j$, $\|A_j\|\leq 1$ and  $\|\phi(B_1,\dots, B_n)\|\leq 1$.
\end{proof}

The following two lemmata are straightforward colloraries of the above.
We include them for  easy reference.

 \begin{lemma}\label{cruc}
 Let $\psi\in \SSS_3$. Suppose $E,W,X,X',Y,Y',Z$ are subspaces of $H$ so that $W,X,Y\subset E$ and $X', Y'\perp E$.  Then
 $$
 \|\psi(W,X,Y)E-\psi(Z,X\vee X',Y\vee Y')E\|\leq |\psi_1|\cdot\|ZE-W\|.
 $$
 \end{lemma}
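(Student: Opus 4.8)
The plan is to reduce everything to Lemma~\ref{wordcont} by choosing the right contractions and then checking that two of the three resulting error terms vanish. Concretely, I would apply Lemma~\ref{wordcont} with $A_1=W$, $A_2=X$, $A_3=Y$ and $B_1=Z$, $B_2=X\vee X'$, $B_3=Y\vee Y'$, keeping the same $E$. The hypothesis of that lemma demands that each $A_i$ commute with $E$, and this is exactly where the assumption $W,X,Y\subset E$ enters. Indeed, if a subspace $F\subset E$, then for any $h\in H$ the component of $h$ orthogonal to $E$ is also orthogonal to $F$, so $Fh=FEh$, i.e.\ $FE=F$, and likewise $EF=F$; in particular $W,X,Y$ each commute with $E$ and satisfy $WE=W$, $XE=X$, $YE=Y$.

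Granting this, Lemma~\ref{wordcont} yields
\[
\|\psi(W,X,Y)E-\psi(Z,X\vee X',Y\vee Y')E\|\le \sum_{i=1}^{3}|\psi_i|\,\|A_iE-B_iE\|,
\]
so it remains only to evaluate the three norms $\|A_iE-B_iE\|$. For $i=1$ the identification gives $\|WE-ZE\|=\|W-ZE\|=\|ZE-W\|$, which is precisely the quantity on the right-hand side of the claim. The point of the lemma is thus to show that the $i=2$ and $i=3$ terms contribute nothing.

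For the $i=2$ term I would use that $X\subset E$ and $X'\perp E$ force $X\perp X'$, so $X\vee X'$ is an orthogonal direct sum and its projection decomposes as $X\vee X'=X+X'$. Composing with $E$ and using $X'\perp E$ (whence $X'E=0$, since $Eh\in E$ is orthogonal to $X'$) gives $(X\vee X')E=XE+X'E=XE$. Hence $\|XE-(X\vee X')E\|=0$, and by the identical argument $\|YE-(Y\vee Y')E\|=0$. Substituting these into the displayed inequality collapses the sum to the single $i=1$ term and produces the asserted bound $|\psi_1|\,\|ZE-W\|$.

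I do not expect a genuine obstacle, as the statement is advertised as a straightforward corollary of Lemma~\ref{wordcont}; the only points requiring care are the two elementary projection identities, namely that a subspace contained in $E$ commutes with $E$ (so that $FE=F$) and that the projection onto the orthogonal sum $X\oplus X'$ splits as $X+X'$ with $X'E=0$. Both follow immediately from the orthogonal decomposition $H=E\oplus E^{\perp}$.
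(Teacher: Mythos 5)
Your proposal is correct and follows essentially the same route as the paper: both apply Lemma~\ref{wordcont} with $A_1=W$, $A_2=X$, $A_3=Y$ and $B_1=Z$, $B_2=X\vee X'$, $B_3=Y\vee Y'$, and observe that the $i=2,3$ terms vanish because $XE=(X\vee X')E$ and $YE=(Y\vee Y')E$. The paper states these identities without proof; you simply supply the (correct) elementary verification via $X\perp X'$ and $X'E=0$.
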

 \begin{proof}
 This is a corollary of Lemma~\ref{wordcont} for $n=3$, since $WE=W$, and $X=XE=(X\vee X')E$, and similarly, $Y=YE=(Y\vee Y')E$.
 \end{proof}
The next statement is a  corollary of Lemma~\ref{wordcont} for $E=H$.

\begin{lemma}\label{continuous}
Let $\phi\in \SSS_n$ and $A_1,\dots A_n$, $B_1,\dots, B_n$ be projections for some $n\in \NN$. Then
$$
\|\phi(A_1,\dots,A_n)-\phi(B_1,\dots,B_n)\|\leq |\phi|\cdot\max_{i=1,\dots,n}\|A_i-B_i\|.
$$
\end{lemma}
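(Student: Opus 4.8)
The plan is to derive this directly from Lemma~\ref{wordcont} by specializing $E$ to the projection onto all of $H$, exactly as the sentence preceding the statement indicates. The orthogonal projection onto $H$ is the identity operator $\Id$, which is a contraction and which commutes with every $A_i$; thus the hypotheses of Lemma~\ref{wordcont} are satisfied once we recall that every orthogonal projection is a contraction (so each $A_i$ and $B_i$ is a contraction as required). With $E=\Id$ we have $A_i E = A_i$ and $B_i E = B_i$, so the conclusion of Lemma~\ref{wordcont} reads
$$
\|\phi(A_1,\dots,A_n)-\phi(B_1,\dots,B_n)\|\leq \sum_{1\leq i\leq n}|\phi_i|\,\|A_i-B_i\|.
$$

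The remaining step is a crude but harmless estimate: I would replace each factor $\|A_i-B_i\|$ by the maximum $\max_{j}\|A_j-B_j\|$, pull this common quantity out of the sum, and then invoke the identity $\sum_{i=1}^n|\phi_i|=|\phi|$ recorded in the Notation section. This yields
$$
\sum_{1\leq i\leq n}|\phi_i|\,\|A_i-B_i\|\leq\Bigl(\sum_{1\leq i\leq n}|\phi_i|\Bigr)\max_{j=1,\dots,n}\|A_j-B_j\|=|\phi|\cdot\max_{j=1,\dots,n}\|A_j-B_j\|,
$$
which is precisely the claimed bound.

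There is no genuine obstacle here: the statement is a one-line consequence of the preceding lemma, and the only points needing a word of justification are that projections are contractions and that the identity commutes with everything, both of which are immediate. If anything, the only thing to be careful about is not to conflate the subspace $H$ with its projection $\Id$ when checking the commutativity hypothesis; once that is stated explicitly, the proof is complete.
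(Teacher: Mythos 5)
Your proof is correct and is exactly the argument the paper intends: it states the lemma as a corollary of Lemma~\ref{wordcont} with $E=H$ (i.e.\ $E=\Id$), and the only additional step is bounding $\sum_i|\phi_i|\,\|A_i-B_i\|$ by $|\phi|\cdot\max_i\|A_i-B_i\|$ via $\sum_i|\phi_i|=|\phi|$, just as you do.
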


\section{Projectional iterates of a point  may diverge}\label{one}

Let $H$ be an infinite dimensional Hilbert space. According to \cite{KM},  there exist three closed subspaces $X_1,X_2,X_3\subset H$,      such that the sequence of iterates $\{w_n\}_{n=1}^{\infty}$ defined by $w_n=X_{k_n}w_{n-1}$ does not converge in  norm. In this section we simplify the proof of this statement.

The example is ``glued" together from finite dimensional blocks. In each of these blocks three subspaces and a finite product of projections are constructed  so that the product maps a given normalized vector $u$ with an arbitrary precision on a normalized vector $v$ orthogonal to $u$.

For $\eps>0$ let $k=k(\eps)$ be the smallest positive integer $k$ such that $(\cos\frac{\pi}{2k})^k>1-\e$. That is, if $u$ and $v$ are two orthonormal vectors, and we project $u$ consecutively onto the  lines $g_1,\dots, g_k$ dividing the right angle between $u$ and $v$ into $k$ angles of size $\frac{\pi}{2k}$, then we land at $v$ with error at most $\eps$

Projecting onto a  line $g_j$ can be arbitrarily well approximated by iterating projections between two subspaces intersecting at this particular line.
In the next lemma we call these spaces $W=u\vee v$ and $X'_j$. By introducing a small error, we modify $X_j'$ into $X_j$, to
  ensure that  $X_1\subset\dots\subset X_k$.
This will enable us later to replace the projections on each of the spaces $X_j$   by the product $(XYX)^{s(j)}$ of  projections onto the largest space $X=X_k$
and its suitable small variation $Y$.  Alltogether, instead of projecting  onto several spaces to get from $u$ to $v$,  we project only on three of them: $W$, $X$, and $Y$. We get a suitable product $\psi$ of these projections so that 
  $\psi(W,X,Y)u\approx v$.

 The  next lemma is  modified from \cite{P}, the proof is taken from \cite{KM}.

\begin{lemma}\label{1}
Let $\eps>0$. Then there exists $\phi\in\SS_{k(\eps)+1}$
with the following properties:

Suppose  $X$ is a subspace of $H$ so that $\dim X=\infty$,
and $u,v \in X$ are so that $\|u\|=\|v\|=1$ and $u\perp v$.
Then there exist  subspaces $X_1\subset \dots\subset X_{k(\eps)}\subset  X$ so that
  $\dim X_{j}=j+1$ for all $j\in \{1,\dots, k(\eps)\}$,   and
$$
\bigl\|\phi(W, X_1,\dots,X_{k(\e)})u-v\bigr\|<2\eps,
$$
where $W=u\vee v$.
\end{lemma}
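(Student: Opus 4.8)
The plan is to follow the geometric reduction announced before the statement, realising the composition of $k=k(\eps)$ line-projections by a fixed word in projections onto $W$ and onto a nested family of subspaces, and to absorb all errors into the budget $2\eps$.

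First I would set up the reduction. In the plane $W=u\vee v$ put $f_j=\cos\frac{j\pi}{2k}\,u+\sin\frac{j\pi}{2k}\,v$ and let $g_j=\vee f_j$, so that $g_k=\vee v$. Projecting $u$ successively onto the lines $g_1,\dots,g_k$ rotates it through the right angle in $k$ equal steps, and a direct computation gives $g_k\cdots g_1\,u=(\cos\frac{\pi}{2k})^k\,v$, whence $\|g_k\cdots g_1\,u-v\|=1-(\cos\frac{\pi}{2k})^k<\eps$ by the defining property of $k(\eps)$. It therefore suffices to approximate the individual projections $g_1,\dots,g_k$ by products of projections onto the prescribed subspaces, with total extra error below $\eps$.

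Next come the von Neumann blocks. For each $j$ I would fix a unit vector $e_j\perp W$ and an auxiliary angle $\beta$, and set $X'_j=\vee f_j\ \vee\ (\cos\beta\, f_j^{\perp}+\sin\beta\, e_j)$, a two–dimensional space with $W\cap X'_j=g_j$ and Friedrichs cosine $\cos\beta$ relative to $W$. Since $WX'_jW$ acts on $W$ as $\mathrm{diag}(1,\cos^2\beta)$ in the basis $(f_j,f_j^{\perp})$, von Neumann's theorem \cite{N} gives $(WX'_j)^{N}\to g_j$ at the geometric rate $\cos^{2N}\beta$, so one can pick $N_j$ with per–block error below $\eps/(2k)$. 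The fixed word is then $\phi=(a_1a_{k+1})^{N_k}\cdots(a_1a_2)^{N_1}\in\SS_{k+1}$, where $a_1$ stands for $W$ and $a_{j+1}$ for the $j$-th space; as the $N_j$ and $\beta$ depend only on $\eps$, the same $\phi$ serves every $X,u,v$, with the spaces adapted to the data.

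The hard part is the nesting. The spaces $X'_j$ are not nested, and the requirement $\dim X_j=j+1$ forces the $W$-shadows of the $X_j$ to grow, which is in direct tension with the clean intersection $W\cap X'_j=g_j$ used above: one cannot literally keep ``each $X_j$ projects onto $g_j$'' while imposing $X_1\subset\cdots\subset X_k$. I would therefore modify each $X'_j$ into an $(j+1)$-dimensional $X_j$ by adjoining extra directions drawn from a fixed orthonormal system orthogonal to $W$ and to the vectors actually visited by the iteration, arranging both the inclusions $X_1\subset\cdots\subset X_k$ and the exact dimensions while keeping each projection close to $X'_j$ in the directions that matter. To convert this into an estimate I would invoke the continuity Lemmas~\ref{wordcont} and \ref{continuous}: with a cut-off $E$ onto the finite-dimensional space carrying the whole trajectory, the perturbation of the product is bounded by $\sum_i|\phi_i|\,\|X_iE-X'_iE\|$, which the construction can force below $\eps$. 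Combining with the reduction gives $\|\phi(W,X_1,\dots,X_k)u-v\|<2\eps$. I expect precisely this balancing act—securing the inclusions, the dimensions $j+1$, and the smallness of the perturbation of the product simultaneously—to be the genuine obstacle, and it is exactly the delicate bookkeeping the statement inherits from \cite{KM}.
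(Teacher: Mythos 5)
Your reduction to the quarter-circle of line projections $g_k\cdots g_1$, the choice of $k(\eps)$, and the use of von Neumann's theorem to realize each $g_j$ as a limit of powers built from $W$ and an auxiliary space meeting $W$ exactly in $g_j$ all match the paper. The gap is in the step you yourself flag as the obstacle: producing the nested spaces. If $X_j$ is obtained from your two-dimensional $X'_j$ by adjoining extra directions, then for $j=1$ there is nothing to adjoin ($\dim X_1=2=\dim X'_1$), so $f_1\in X_1=X'_1\subset X_2$, and since $f_2\in X'_2\subset X_2$ we get $W=\vee\{f_1,f_2\}\subset X_2$; hence $(WX_2)^{N}$ is the identity on $W$ rather than approximately $g_2$. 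The continuity lemmas cannot rescue this: any admissible cut-off $E$ must contain $u$ and the trajectory, hence essentially all of $W$, and while nesting forces $X_2$ to contain a unit vector at distance $o(1)$ from $f_1$, one computes $\|X'_2f_1\|=\sqrt{1-\sin^2(\pi/2k)\sin^2\beta}<1$; therefore $\|X_2E-X'_2E\|\geq\|X_2f_1-X'_2f_1\|$ is bounded below by a constant depending only on $k$ and $\beta$, and $\sum_i|\phi_i|\,\|X_iE-X'_iE\|$ cannot be pushed below $\eps$. In short, ``nested and uniformly close on the trajectory to the non-nested $X'_j$'' is not delicate but contradictory.

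The missing idea, which is the heart of the paper's proof, is to tilt the \emph{earlier} generators out of $W$ rather than keep them there: with $z_i\perp W$ and a rapidly decreasing sequence $\al_0>\al_1>\cdots>\al_k=0$ one sets $X_j=\vee\{h_0+\al_0z_0,\dots,h_{j-1}+\al_{j-1}z_{j-1},h_j+\al_jz_j\}$, so that the inclusions and $\dim X_j=j+1$ hold by construction while $X_j$ meets $W$ only (approximately) in $\vee h_j$. The crucial quantifier order is: first fix the exponent $r(j)$ using the auxiliary space whose last generator is $h_j$ itself, so that von Neumann applies with limit exactly $\vee h_j$; only then choose $\al_j$ so small that the \emph{fixed} power $(X_jWX_j)^{r(j)}$ is still within $\eps/k$ of $\vee h_j$. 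Your proposal never perturbs the earlier points $f_1,\dots,f_{j-1}$ off $W$ and never inverts these quantifiers, and without both moves the construction cannot be completed.
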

\begin{proof}
Write $k:=k(\eps)$. Choose orthonormal vectors $z_0,z_1,\dots,z_{k-1}\in W^{\perp}\cap X$.

Let $\xi=\frac{\pi}{2k}$. For $j=0,\dots,k$, let $h_j=u\cos {j\xi}+v\sin{j\xi}$ be the points on the quarter circle connecting $h_0=u$ to $h_k=v$.
We construct inductively a rapidly decreasing sequence of nonnegative numbers $\al_0>\al_1>\cdots>\al_{k-1}>\al_k=0$ in the following way.
Choose $\al_0\in (0,1)$ arbitrarily. Let $1\le j\le k-1$ and suppose that
$\al_0,\dots,\al_{j-1}$ and subspaces $X_1\subset\cdots\subset X_{j-1}$
have already been constructed.
Set
$$
X'_j=\vee\{h_0+\al_0z_0,h_1+\al_1z_1,\dots,h_{j-1}+\al_{j-1}z_{j-1}, h_j\}.
$$
 Since $W\cap X'_j=\vee h_j$, we have
$(X'_jWX'_j)^rx\to (\vee h_j)x$  for each $x\in H$ as $r\to\infty$,   by \cite{N}. As both spaces are finite dimensional, there exists $r(j)\in\NN$ such that
$$
\bigl\|(X'_jWX'_j)^{r(j)}-(\vee h_j)\bigr\|<\frac{\eps
}{k}.
$$
Let $\al_j>0$ be so small that
\begin{equation}\label{line}
\bigl\|(X_jWX_j)^{r(j)}-(\vee h_j)\bigr\|<\frac{\eps}{k},
\end{equation}
where
$$
X_j=\vee\{h_0+\al_0z_0,h_1+\al_1z_1,\dots,h_{j-1}+\al_{j-1}z_{j-1}, h_j+\al_j z_j\}.
$$
Suppose that $X_1\subset X_2\subset\cdots\subset X_{k-1}$ have already been constructed. Set formally $\alpha_k=0$ and
$X_k=X'_k=\vee\{h_0+\al_0z_0,h_1+\al_1z_1,\dots,h_{k-1}+\al_{k-1}z_{k-1}, h_k\}$.
Find $r(k)\in\NN$ such that (\ref{line}) is true also for $j=k$. Then $v=h_k\in X_k$.
Let $\phi \in \SSS_{k+1}$ and $\psi\in \SSS_k$ be defined by
$$
\phi(c,b_1,\dots,b_k)=
(b_kcb_k)^{r(k)}\cdots (b_1cb_1)^{r(1)},
$$
and $\psi(a_1,\dots,a_k)=a_1\dots a_k$.
Then
\begin{equation}\notag
\begin{split}
\bigl\|\phi&(W,X_1,\dots,  X_k)-\vee h_k\dots\vee h_1\bigr\| \\
=&\bigl\|\psi((X_kWX_k)^{r(k)},\dots,(X_1WX_1)^{r(1)})- \psi(\vee h_k,\dots,\vee h_1)\bigr\| 
<k\cdot\frac{\eps}k,
\end{split}
\end{equation}
by Lemma~\ref{continuous}. Hence
\begin{equation}\notag
\begin{split}
\bigl\|\phi&(W,X_1,\dots,  X_k)u-v\bigr\| \\
\leq& \bigl\|\phi(W,X_1,\dots,  X_k)u-(\vee h_k\dots\vee h_1)u\bigr\|+
\bigl\|(\vee h_k\dots\vee h_1)u-v\bigr\|<2\eps.
\end{split}
\end{equation}
It follows from the construction, that the resulting word $\phi\in\SS_{k(\eps)+1}$
does not depend on the particular $X$, $u$, and $v$.

\end{proof}

Projections onto  an increasing family of $n$  finite dimensional spaces can be replaced
 by projections onto just two spaces: onto the largest space in the family
and onto  a suitable small variation of it.
The next lemma is  generalized from \cite{P}, its proof from \cite{KM}.
 
\begin{lemma}\label{ktotwo}
Let $k\in \NN$ and $\eps>0$, $\eta>0$, and $a>0$ be given. There exist natural
numbers $a<s(k)<s(k-1)<\cdots< s(1)$ with the following property.
Suppose
 $X_1\subset X_2\subset\cdots\subset X_k\subset X\subset E\subset  H$ are closed  subspaces
so that $X$ is separable and   $\dim X^{\perp}\cap E=\infty$.       Then there exists a closed subspace $Y\subset E$    such that $X\cap Y=\{0\}$, $\|X-Y\|<\eta$ and for each $j\in\{1,\dots,k\}$,
$$
\bigl\|(XYX)^{s(j)}-X_j\bigr\|<\eps.
$$
\end{lemma}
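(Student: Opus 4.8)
The plan is to exploit that $A:=XYX$ is a positive self-adjoint contraction supported on $X$, and to build $Y$ as a small ``tilt'' of $X$ whose tilt angles are prescribed cell by cell along the flag $X_1\subset\cdots\subset X_k$. This makes $A$ diagonal with explicitly chosen eigenvalues, all close to $1$, and then the powers $A^{s(j)}$ realize the projections $X_j$ through a separation-of-scales argument.

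First I would fix all the constants, depending only on $k,\eps,\eta,a$ and not on the subspaces. Choose $0<\delta_1<\delta_2<\cdots<\delta_{k+1}<\eta^2$ and integers $a<s(k)<\cdots<s(1)$ recursively, from $j=k$ down to $j=1$, so that for every $j$
$$
(1-\delta_j)^{s(j)}>1-\eps\qquad\text{and}\qquad(1-\delta_{j+1})^{s(j)}<\eps.
$$
This is possible because, with $\delta_{j+1}$ already fixed, $(1-\delta_{j+1})^{s}\to 0$ as $s\to\infty$ lets me take $s(j)$ large, in particular $s(j)>\max\{a,s(j+1)\}$; and then, with $s(j)$ fixed, $(1-\delta)^{s(j)}\to 1$ as $\delta\to 0^+$ lets me take $\delta_j\in(0,\delta_{j+1})$ small. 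The forced hierarchy $\delta_j\ll 1/s(j)\ll\delta_{j+1}$ is exactly why the exponents come out strictly decreasing.

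Next, given the subspaces, I construct $Y$. Using separability of $X$, fix an orthonormal basis $\{u_i\}$ of $X$ adapted to the flag, so each $u_i$ lies in a single cell $X_l\ominus X_{l-1}$ (with $X_{k+1}:=X$); using $\dim(X^\perp\cap E)=\infty$, fix an orthonormal sequence $\{w_i\}\subset X^\perp\cap E$ indexed by the same set. Put $\delta^{(i)}=\delta_l$ when $u_i\in X_l\ominus X_{l-1}$, pick $\theta_i\in(0,\pi/2)$ with $\cos^2\theta_i=1-\delta^{(i)}$, and set $y_i=\cos\theta_i\,u_i+\sin\theta_i\,w_i$ and $Y=\bigvee\{y_i\}$. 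Since the $y_i$ are orthonormal, the projection onto $Y$ is transparent: $Yu_i=\cos\theta_i\,y_i$, whence $Au_i=\cos^2\theta_i\,u_i=(1-\delta^{(i)})u_i$ and $Aw_i=0$, so $A$ is diagonal in $\{u_i\}\cup\{w_i\}$ and vanishes on the orthogonal complement. Matching $w_i$-coordinates shows $X\cap Y=\{0\}$, and since $X,Y$ restrict in each plane $\spn\{u_i,w_i\}$ to two lines at angle $\theta_i$, one gets $\|X-Y\|=\sup_i\sin\theta_i\le\sqrt{\delta_{k+1}}<\eta$, with $Y\subset E$ as $y_i\in X\vee(X^\perp\cap E)$.

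Finally, $(XYX)^{s(j)}-X_j$ is diagonal in the same system and vanishes on its orthogonal complement: it is $(1-\delta^{(i)})^{s(j)}-1$ on $u_i\in X_j$, equals $(1-\delta^{(i)})^{s(j)}$ on $u_i\in X\ominus X_j$, and $0$ on every $w_i$. Monotonicity of the $\delta_l$ reduces its norm to $\max\{\,1-(1-\delta_j)^{s(j)},\,(1-\delta_{j+1})^{s(j)}\,\}$, which is $<\eps$ by the two displayed inequalities. I expect the main obstacle to be precisely the tension encoded in those inequalities: the constraint $\|X-Y\|<\eta$ pins every eigenvalue of $A$ into $(1-\eta^2,1)$, so a single power $A^{s(j)}$ must simultaneously keep the eigenvalues on $X_j$ near $1$ and crush those on $X\ominus X_j$ near $0$ although they differ only slightly. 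Engineering the gaps $\delta_j\ll 1/s(j)\ll\delta_{j+1}$ is what makes this feasible, and getting the quantifier order right—choosing the $s(j)$ before the subspaces are revealed—is the delicate bookkeeping.
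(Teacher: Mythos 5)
Your proof is correct and takes essentially the same route as the paper: the exponents and tilt parameters are fixed by the same alternating recursion before the subspaces are revealed, and $Y$ is the span of an orthonormal basis of $X$ adapted to the flag, tilted cell by cell into $X^\perp\cap E$, so that $XYX$ is diagonal with eigenvalues constant on each cell. The only differences are cosmetic: you parametrize by angles with $\cos^2\theta_i=1-\delta_l$ where the paper spans $Y$ by $e_i+\gamma_i w_i$ and gets eigenvalues $1/(1+\gamma_i^2)$, and your identity $\|X-Y\|=\sup_i\sin\theta_i$ is a cleaner version of the paper's direct estimate of $\|Xz-Yz\|$.
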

\begin{proof} We can assume $0<\eta<1$.
First we fix   $0<\beta_{k+1}<\eta/2$ and
choose $s(k)>a$ such that $1/(1+\beta_{k+1}^2)^{s(k)}<\eps$.
Next we
inductively choose numbers
$$
\beta_k, s(k-1),\beta_{k-1}, s(k-2),\dots, s(1),\beta_1
$$  such that
\begin{equation}\notag
\begin{split}
\beta_{k+1}&>\beta_{k}>\cdots>\beta_1>0, \\
a&<s(k)<s(k-1)<\cdots<s(1), \\
\frac{1}{(1+\beta_{j+1}^2)^{s(j)}}&<\e \mbox{ and }
\Bigl|\frac{1}{(1+\beta_j^2)^{s(j)}} - 1\Bigr|<\e
\end{split}
\end{equation}
for $j=k,\dots,1$. We show that these $s_j$'s are as required.

Let $\{e_i\}_{i\in I}$  be an at most countable orthonormal basis in $X$ such that there are index sets
$\emptyset=I_0\subset I_1\subset\cdots\subset I_k\subset I_{k+1}=I$  with the property, that $\{e_i\}_{i\in I_j}$  is an orthonormal basis in $X_j$ for
$j\in \{1,\dots,k\}$.   For $i\in I_j\setminus I_{j-1}$ define
$\gamma_i=\beta_j$. Let $\{w_i\}_{i\in I}$  be an orthonormal system  in $X^{\perp}\cap E$. We construct $Y$ as the closed linear span of the vectors $e_i+\gamma_iw_i$, $i\in I$.
Note that if $Y$ is constructed in this way, we have for $m\in \NN$ and $i\in I$,
\begin{equation}\label{itera}
(XYX)^me_i=\frac{e_i}{(1+\gamma_i^2)^m}.
\end{equation}

If $x=\sum_{i\in I} a_ie_i\in X$, then by (\ref{itera}),
\begin{equation}\label{estimate}
\begin{split}
&\bigl\|(XYX)^{s(j)}x-X_jx\bigr\|^2
=\bigl\|\sum_{i\in I}a_i\frac{e_i}{(1+\gamma_i^2)^{s(j)}}-\sum_{i\in I_j}a_ie_i \bigr\|^2 \\
&=\sum_{i\in I_j}a^2_i\left(1-\frac{1}{(1+\gamma_i^2)^{s(j)}}\right)^2+\sum_{i\in I\setminus I_j} a^2_i\frac{1}{(1+\gamma_i^2)^{2s(j)}} \\
&\leq \eps^2\sum_{i\in I} a_i^2=\eps^2\|x\|^2.
\end{split}
\end{equation}
For any $z\in H$ we have
$$
(XYX)^{s(j)}z-X_jz=(XYX)^{s(j)}(Xz)-X_j(Xz),
$$
since $X_j\subset X$. Hence by (\ref{estimate}) for $j\in\{1,\dots,k\}$,
$$
\bigl\|(XYX)^{s(j)}-X_j\bigr\|<\e.
$$
 It is easy to see that $\|X-Y\|<\eta$. Indeed, for any $0\neq z\in H$,
\begin{equation}\notag
\begin{split}
\|Xz-Yz\|^2=& \|\sum_{i\in I}\langle e_i,z\rangle e_i-\sum_{i\in I}\frac 1{1+\gamma_i^2}\langle e_i+\gamma_i w_i,z\rangle (e_i+\gamma_i w_i)\|^2\\
\leq&\sum_{i\in I}\frac 1{(1+\gamma_i^2)^2}[\gamma_i^2\langle e_i,z\rangle-
\gamma_i\langle  w_i,z\rangle]^2 \\
&+\frac{\eta^2}4\sum_{i\in I}[\frac 1{1+\gamma_i^2}\langle e_i+\gamma_i w_i,z\rangle]^2 \\
\leq & 2(\eta/2)^4 \|z\|^2+2(\eta/2)^2\|z\|^2+(\eta^2/4)\|z\|^2 \\
< &\eta^2\|z\|^2
\end{split}
\end{equation}
We have used that  $0< \gamma_i<\beta_{k+1}<\eta/2<1$, and in the third line the estimate $(a-b)^2\leq 2a^2+2b^2$. Since $0< \gamma_i$ for all $i\in I$, we have  that $X\cap Y=\{0\}$.
\end{proof}

Again, let $u$ and $v$ be orthornormal, $W=u\vee v$,  and $\eps>0$. In the next lemma 
we construct a  word $\psi$ and a two ``almost parallel" spaces $X$ and $Y$ so that 
$\bigl\|\psi(W, X,Y)u-v\bigr\|<2\eps$. Importantly, the number of appearances of $W$ in $\psi$ depends {\em only} on $\eps$. On the contrary, the closer together the spaces $X$ and $Y$ are, the  larger is their  number of appearances in $\psi$.

\begin{lemma}\label{N}
For every  $\eps>0$, there exists $N=N(\eps)$, so that for every $\eta>0$, there exists $\psi\in \SSS_3$ so that $|\psi_1|\leq N$
with the following property.

Given subspaces
 $X\subset E\subset  H$
so that $X$ is separable and   $\dim X=\dim X^{\perp}\cap E=\infty$, and $u,v \in X$ are so that $\|u\|=\|v\|=1$ and $u\perp v$,
there exists a subspace $Y\subset E$    such that $X\cap Y=\{0\}$, $\|X-Y\|<\eta$, and
$$
\bigl\|\psi(W, X,Y)u-v\bigr\|<3\eps,
$$
where $W=u\vee v$.
\end{lemma}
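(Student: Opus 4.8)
The plan is to fuse the two preceding constructions. Lemma~\ref{1} produces a \emph{fixed} word $\phi$ that carries $u$ to $v$ by projecting onto $W$ and onto an increasing chain $X_1\subset\dots\subset X_k$, while Lemma~\ref{ktotwo} lets me trade each projection $X_j$ for a power $(XYX)^{s(j)}$. Substituting these powers into $\phi$ will produce the desired word $\psi\in\SSS_3$ in the three letters $W,X,Y$, and the whole task reduces to bookkeeping of how many letters of each kind appear.

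First I would run Lemma~\ref{1} with the given $\eps$. This fixes $k:=k(\eps)$ and, crucially, a word $\phi\in\SSS_{k+1}$ that does \emph{not} depend on $X,u,v$. From its explicit form $\phi(c,b_1,\dots,b_k)=(b_kcb_k)^{r(k)}\cdots(b_1cb_1)^{r(1)}$ I read off the two counts that matter: the number $N:=|\phi_c|=\sum_{j=1}^{k}r(j)$ of occurrences of the letter $c$ (destined to become $W$), and the numbers $m_j:=|\phi_{b_j}|=2r(j)$ of occurrences of each $b_j$ (destined to become the block $(XYX)^{s(j)}$). Both $N$ and the $m_j$ depend on $\eps$ alone. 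I set $N(\eps):=N$ and fix an auxiliary tolerance $\eps'>0$ with $\bigl(\sum_{j=1}^{k}m_j\bigr)\eps'<\eps$, for instance $\eps'=\eps/\bigl(1+\sum_{j}m_j\bigr)$.

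Now let $\eta>0$ be given. I apply Lemma~\ref{ktotwo} with parameters $k$, $\eps'$, $\eta$ (and any $a>0$) to obtain exponents $s(k)<\dots<s(1)$ that depend only on $k,\eps',\eta$, not on the eventual subspaces, and I define $\psi\in\SSS_3$ by replacing, inside $\phi$, the letter $c$ by $a_1$ and each $b_j$ by the word $(a_2a_3a_2)^{s(j)}$. Since this substitution creates no new copies of $a_1$, we get $|\psi_1|=|\phi_c|=N$ as required, and, setting $\Phi:=\phi(W,X_1,\dots,X_k)$, the identification $\psi(W,X,Y)=\phi\bigl(W,(XYX)^{s(1)},\dots,(XYX)^{s(k)}\bigr)$. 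For an admissible $X\subset E$ and orthonormal $u,v\in X$, Lemma~\ref{1} supplies the chain $X_1\subset\dots\subset X_k\subset X$ with $\|\Phi u-v\|<2\eps$, and Lemma~\ref{ktotwo} supplies $Y\subset E$ with $X\cap Y=\{0\}$, $\|X-Y\|<\eta$ and $\|(XYX)^{s(j)}-X_j\|<\eps'$ for every $j$. I then split
\[
\|\psi(W,X,Y)u-v\|\le\|\psi(W,X,Y)u-\Phi u\|+\|\Phi u-v\|,
\]
the second term being $<2\eps$. For the first term I compare $\phi$ at the two tuples $\bigl(W,(XYX)^{s(1)},\dots\bigr)$ and $(W,X_1,\dots,X_k)$, which agree in the $c$-slot; here I must invoke Lemma~\ref{wordcont} with $E=\Id$ rather than Lemma~\ref{continuous}, because the blocks $(XYX)^{s(j)}$ are contractions but not projections. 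This yields
\[
\|\psi(W,X,Y)-\Phi\|\le\sum_{j=1}^{k}m_j\,\|(XYX)^{s(j)}-X_j\|<\Bigl(\sum_{j=1}^{k}m_j\Bigr)\eps'<\eps,
\]
and the triangle inequality gives $\|\psi(W,X,Y)u-v\|<3\eps$.

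The step I expect to be the genuine obstacle is the order of quantifiers together with the control of the letter counts. One must extract $\phi$ — and hence $N$ and the $m_j$ — from $\eps$ \emph{alone}, then choose $\eps'$ small enough to absorb the possibly enormous factor $\sum_j m_j$, and only afterwards call Lemma~\ref{ktotwo} with $\eta$. The decisive point is the asymmetry: shrinking $\eta$ forces the exponents $s(j)$, and thus the number of occurrences of $X$ and $Y$, to blow up, whereas the number of occurrences of $W$ stays pinned at $N=N(\eps)$. This is precisely the bound $|\psi_1|\le N$ asserted by the lemma, and it is exactly the feature that the later gluing argument will exploit.
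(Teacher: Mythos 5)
Your proposal is correct and follows essentially the same route as the paper: fix $\phi$ and $N=|\phi_1|$ via Lemma~\ref{1}, invoke Lemma~\ref{ktotwo} with the tolerance shrunk by the letter count of $\phi$ (the paper uses $\eps/|\phi|$, you use $\eps/(1+\sum_j m_j)$ --- same effect), substitute $(a_2a_3a_2)^{s(j)}$ for each $b_j$ to form $\psi$ with $|\psi_1|=N$, and conclude by the triangle inequality plus word-continuity. Your observation that one should cite Lemma~\ref{wordcont} with $E=H$ rather than Lemma~\ref{continuous} (whose statement assumes projections, whereas the blocks $(XYX)^{s(j)}$ are merely contractions) is a minor but valid refinement of the paper's own citation.
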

\begin{proof}
Let $\eps>0$ be given. Let  $\phi\in\SS_{k(\eps)+1}$ be as in Lemma~\ref{1}, and
 $N=|\phi_1|$. Let $\eta>0$ be given.
For $k=k(\eps)$, $\eps$ replaced by $\eps/|\phi|$,  the given   $\eta$, and $a=1$ choose the natural numbers
$s(k)<s(k-1)<\dots<s(1)$ according to Lemma~\ref{ktotwo}.
To define the word $\psi$, replace for each $i\in \{2,\dots, k+1\}$ the letter
$a_i$ in $\phi$ by $(a_2a_3a_2)^{s(i-1)}$. With a slight abuse of notation, but certainly more understandably,
$$
\psi(W,X,Y)=
\phi(W,(XYX)^{s(1)},\dots, (XYX)^{s(k)}).
$$
Clearly, $|\psi_1|=|\phi_1|=N$.

Let $u,v\in X\subset E$ as in the lemma be given.
According to Lemma~\ref{1}, there exist subspaces
$X_1\subset \dots\subset X_{k}\subset  X$ so that
$$
\bigl\|\phi(W, X_1,\dots,X_{k})u-v\bigr\|<2\eps,
$$
where $W=u\vee v$. By Lemma~\ref{ktotwo} there exists a subspace $Y\subset E$    such that $X\cap Y=\{0\}$, $\|X-Y\|<\eta$ and for each $j\in\{1,\dots,k\}$,
$$
\bigl\|(XYX)^{s(j)}-X_j\bigr\|<\eps/|\phi|.
$$
Then
\begin{equation}\notag
\begin{split}
\bigl\|\psi&(W,X,Y)u-v\bigr\| \\
=&\|\phi(W,(XYX)^{s(1)},\dots, (XYX)^{s(k)})u-v\| \\
\leq &
\bigl\|\phi(W,(XYX)^{s(1)},\dots, (XYX)^{s(k)})u-\phi(W,X_1,\dots,X_k)u\bigr\|
\\
&+\bigl\|\phi(W,X_1,\dots,X_k)u-v\bigr\|<3\eps
\end{split}
\end{equation}
by Lemma~\ref{continuous}.
\end{proof}

Next we show that we have  relative freedom of choice for the three spaces 
verifying  $\psi(W,X,Y)u\approx v$. 

\begin{lemma}\label{delta}
For every  $\eps>0$, there exists $\delta=\delta(\eps)$, so that for every $\eta>0$, there exists $\psi\in \SSS_3$
with the following property.

Given subspaces
 $X\subset E\subset  H$
so that $X$ is separable and   $\dim X=\dim X^{\perp}\cap E=\infty$, and $u,v \in X$ are so that $\|u\|=\|v\|=1$ and $u\perp v$,
there exists a subspace $Y\subset E$    such that $X\cap Y=\{0\}$, $\|X-Y\|<\eta$ with the following property. If $W=u\vee v$ and   $X',Y', Z$ are subspaces such that $X',Y'\subset E^{\perp}$ and $\|W-ZE\|<\delta$ then
$$
\bigl\|\psi(Z, X\vee X',Y\vee Y')u-v\bigr\|<4\eps.
$$
\end{lemma}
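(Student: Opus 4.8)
The plan is to derive everything directly from Lemma~\ref{N} together with the perturbation estimate of Lemma~\ref{cruc}; no fresh construction is needed, and the only quantitative input is the uniform bound $|\psi_1|\le N$ on the number of appearances of the first letter.

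First, given $\eps>0$, I would take $N=N(\eps)$ from Lemma~\ref{N} (note $N\ge 1$) and set $\delta=\delta(\eps):=\eps/N$, so that $\delta$ depends only on $\eps$, as required by the order of quantifiers. Given $\eta>0$, Lemma~\ref{N} supplies a word $\psi\in\SSS_3$ with $|\psi_1|\le N$, and this is the $\psi$ I would use. For data $X\subset E\subset H$ and orthonormal $u,v\in X$ as in the statement, the same lemma produces a subspace $Y\subset E$ with $X\cap Y=\{0\}$, $\|X-Y\|<\eta$, and
$$
\bigl\|\psi(W,X,Y)u-v\bigr\|<3\eps,\qquad W=u\vee v.
$$
This $Y$ is exactly the subspace demanded by the conclusion.

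Next I would bring in the perturbed triple. Suppose $X',Y',Z$ are given with $X',Y'\subset E^{\perp}$ and $\|W-ZE\|<\delta$. Since $u,v\in X\subset E$, we have $W=u\vee v\subset E$ and $X,Y\subset E$, while $X',Y'\perp E$; thus the hypotheses of Lemma~\ref{cruc} are met, and that lemma yields
$$
\bigl\|\psi(W,X,Y)E-\psi(Z,X\vee X',Y\vee Y')E\bigr\|\le|\psi_1|\cdot\|ZE-W\|<N\delta=\eps.
$$
Because $u\in E$ gives $Eu=u$, applying both operators to $u$ produces $\bigl\|\psi(W,X,Y)u-\psi(Z,X\vee X',Y\vee Y')u\bigr\|<\eps$. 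The triangle inequality then closes the argument:
$$
\bigl\|\psi(Z,X\vee X',Y\vee Y')u-v\bigr\|\le\eps+\bigl\|\psi(W,X,Y)u-v\bigr\|<\eps+3\eps=4\eps.
$$

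I expect no genuine obstacle: the real content has already been absorbed into Lemma~\ref{N}, which fixes $|\psi_1|\le N$ \emph{independently of $\eta$}, and into Lemma~\ref{cruc}, which converts smallness of $\|W-ZE\|$ into smallness of the whole product, using precisely that $W$ occurs at most $N$ times and that $X',Y'\perp E$, so that adjoining them to $X,Y$ does not alter the action on $E$. The single point that must be respected is the nesting of quantifiers: $\delta$ has to be manufactured from $\eps$ alone, before $\eta$ and before the perturbations $X',Y',Z$ are revealed, which is exactly why the uniformity of the bound $|\psi_1|\le N$ in Lemma~\ref{N} is indispensable.
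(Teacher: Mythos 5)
Your proposal is correct and coincides with the paper's own proof: both set $\delta=\eps/N$ with $N=N(\eps)$ from Lemma~\ref{N}, take $\psi$ and $Y$ from that lemma, and then combine Lemma~\ref{cruc} with the triangle inequality to get the bound $N\delta+3\eps=4\eps$. Nothing is missing.
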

\begin{proof}
Given an $\eps>0$, choose $N\in \NN$ according to Lemma~\ref{N} and put $\delta=\eps/N$. For these $\eps$ and $N$ and a given $\eta$ choose  $\psi$ according to Lemma~\ref{N}, and for a given subspace $X$ choose also $Y$ according to this lemma.
Let $X',Y', Z$ be as above.  Lemma~\ref{cruc} implies, that
\begin{equation}\notag
\begin{split}
\bigl\|\psi(Z,& X\vee X',Y\vee Y')u-v\bigr\| \\ \leq &
\bigl\|\psi(Z, X\vee X',Y\vee Y')u- \psi(W, X,Y)u   \bigr\| +
\bigl\|\psi(W, X,Y)u-v\bigr\| \\
\leq &
|\psi_1|\|W-ZE\|+3\eps\leq N\eps/N+3\eps=4\eps.
\end{split}
\end{equation}
\end{proof}

Now we glue the finite dimensional steps together.
Given an orthonormal sequence $\{e_i\}_{i=1}^{\infty}$ with an infinite dimensional orthocomplement,
we construct three spaces $X,Y,Z$  and words $\Psi_i\in \SSS_3$ 
so that $\Psi_i(Z,X,Y)e_i\approx e_{i+1}$  for all $i\in \NN$. 

\begin{lemma}\label{constr}
For any  $\eps_i>0$, $i=1,2, \dots$, there exist $\Psi_i\in \SSS_3$ with the following property.

Suppose $\{e_i\}_{i=1}^{\infty}$ is an orthonormal sequence in $H$ with
an infinite dimensional orthogonal complement.
Then there are
three closed subspaces $X,Y,Z\subset H$    so that
$$
\bigl\|\Psi_i(Z, X,Y)e_i-e_{i+1}\|<4\eps_i.
$$
\end{lemma}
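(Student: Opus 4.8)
The plan is to build the three global subspaces by laying down one copy of the local construction of Lemma~\ref{delta} for each transition $e_i\rightsquigarrow e_{i+1}$, letting the blocks interact only through a single ``landmark'' space. I would set $Z=\bigvee_i e_i$, decompose the infinite dimensional space $(\bigvee_i e_i)^{\perp}$ into pairwise orthogonal infinite dimensional pieces $F_1,F_2,\dots$, and take $E_i=(e_i\vee e_{i+1})\oplus F_i$ as the region of the $i$-th block. The point of this choice is that $ZE_i=e_i\vee e_{i+1}=W_i$ \emph{exactly}, so the hypothesis $\|W_i-ZE_i\|<\delta_i$ of Lemma~\ref{delta} is met for free, whatever the $\delta_i=\delta(\eps_i)$ turn out to be; all the remaining work concerns $X$ and $Y$.

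For each $i$ I would fix a separable $\hat X_i\subset E_i$ with $e_i,e_{i+1}\in\hat X_i$ and $\dim\hat X_i=\dim(\hat X_i^{\perp}\cap E_i)=\infty$, pick an auxiliary tolerance $\eta_i>0$ (to be chosen last), and apply Lemma~\ref{delta} with $\eps=\eps_i$, $u=e_i$, $v=e_{i+1}$, $X=\hat X_i$, $E=E_i$, $\eta=\eta_i$. This produces a word $\Psi_i:=\psi\in\SSS_3$ depending only on $\eps_i,\eta_i$, together with a subspace $\hat Y_i\subset E_i$ with $\|\hat X_i-\hat Y_i\|<\eta_i$. I then set $X=\bigvee_i\hat X_i$ and $Y=\bigvee_i\hat Y_i$ and verify the conclusion one index at a time: for fixed $i$, if I can exhibit $X',Y'\perp E_i$ with $X=\hat X_i\vee X'$ and $Y=\hat Y_i\vee Y'$, then Lemma~\ref{delta} with these $X',Y'$ and with $Z=\bigvee_j e_j$ delivers exactly $\|\Psi_i(Z,X,Y)e_i-e_{i+1}\|<4\eps_i$.

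The main obstacle is precisely this exact splitting. For $X$ it is harmless: since each $\hat X_i$ contains its two landmarks cleanly, $X=Z\oplus\bigoplus_j\bigl(\hat X_j\ominus(e_j\vee e_{j+1})\bigr)$ reduces along every $E_i$ with $X\cap E_i=\hat X_i$, so $X'=X\ominus\hat X_i\perp E_i$. For $Y$ it is genuinely delicate. The spaces $\hat Y_i$ are \emph{tilted} off the landmark plane $W_i$ — this tilt is exactly what lets the product carry $e_i$ to $e_{i+1}$, so it cannot be removed — while consecutive blocks share the landmark $e_{i+1}\in E_i\cap E_{i+1}$. Hence $\hat Y_{i+1}$ necessarily has a nonzero component along $e_{i+1}\in E_i$, the naive complement $\bigvee_{j\neq i}\hat Y_j$ is not orthogonal to $E_i$, and $Y$ fails to split. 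I cannot treat this as a small error: Lemma~\ref{delta} controls only the number $|\psi_1|\le N(\eps_i)$ of occurrences of the $Z$-slot and leaves \emph{no} slack in the $X$- and $Y$-slots, whose occurrence counts blow up as $\eta_i\to0$; so an $O(1)$ leakage of a clean landmark direction into the wrong block is fatal, and the splitting of $Y$ must be made exact.

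To resolve this I would exploit the asymmetry built into Lemma~\ref{delta}: the landmark space $Z$ is \emph{not} required to reduce along the $E_i$ (only $\|W_i-ZE_i\|<\delta_i$ is asked), whereas $X$ and $Y$ are. The guiding idea is therefore to route all inter-block overlap through $Z$ while keeping the tilted parts of $X$ and $Y$ confined to the mutually orthogonal interiors $F_i$. The difficulty I expect to dominate the proof is that a shared landmark $e_{i+1}$ plays two incompatible roles — it is the top of the flag approximated inside $E_i$ and the bottom of the flag approximated inside $E_{i+1}$ — yet a single pair $X,Y$ must realize both. A naive remedy, perturbing $e_{i+1}$ identically from both sides through a common auxiliary direction, does make $Y$ split, but it then demands that the same perturbation let $e_{i+1}$ survive the short products on one side and decay under the long products on the other; matched across all $i$ this would force the block exponents $s(\cdot)$ to decrease without bound, which is impossible. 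Overcoming this — presumably by treating the whole chain as one coherent construction with a single, carefully interleaved schedule of perturbation scales and exponents, and by absorbing the residual landmark coupling into the flexible $Z$-slot, where Lemmas~\ref{cruc} and \ref{wordcont} guarantee it costs only $|\psi_1|\le N(\eps_i)$ — is the crux of the argument and the step I would budget the most effort for.
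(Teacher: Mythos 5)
Your diagnosis of the obstruction is exactly right, but the proof is not complete: it stops precisely where the one real idea of the argument is needed. With $Z=\bigvee_i e_i$ and $Y=\bigvee_i\hat Y_i$, the space $Y$ genuinely fails to reduce along $E_i$ (the tilted generator $e_{i+1}+\gamma w$ of $\hat Y_{i+1}$ has a nonzero component along $e_{i+1}\in E_i$), so the hypothesis of Lemma~\ref{delta} that $Y=\hat Y_i\vee Y'$ with $Y'\perp E_i$ is violated; and, as you correctly observe, this cannot be treated as a small perturbation because only the $Z$-slot occurrence count $|\psi_1|\le N(\eps_i)$ is controlled. The escape routes you sketch (perturbing the shared landmark identically from both sides, or a globally rescheduled choice of exponents) are the ones you yourself rule out, and you explicitly leave the crux unresolved.

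The paper's resolution is different from anything you propose: there is \emph{no} dedicated landmark space at all. One keeps a single tilted space $Y_i\subset E_i$ per block, with $\|X_i-Y_i\|<\eta_i:=\min\{\delta_{i-1},\delta_{i+1}\}/2$, and splits the family by parity, setting $X=\bigvee_i X_i$, $Y=\bigvee_k Y_{2k}$, $Z=\bigvee_k Y_{2k+1}$, with $\Psi_i=\psi_i$ for even $i$ and $\Psi_i(Z,X,Y)=\psi_i(Y,X,Z)$ for odd $i$. For a fixed even $i$, the even-indexed family reduces along $E_i$ exactly, since all its other members lie in blocks $E_{2k'}$ with $|2k'-i|\ge 2$, hence orthogonal to $E_i$ --- this is what kills the leakage you worried about --- while the only offending neighbours $Y_{i-1},Y_{i+1}$ are assigned to the \emph{landmark slot}, where Lemma~\ref{delta} tolerates them: using $X_{i\pm1}E_i=\vee e_i$ resp.\ $\vee e_{i+1}$ one gets $\|W_i-ZE_i\|\le\|X_{i-1}-Y_{i-1}\|+\|X_{i+1}-Y_{i+1}\|\le\eta_{i-1}+\eta_{i+1}\le\delta_i$. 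The two incompatible roles of the shared landmark $e_{i+1}$ are thus reconciled by alternating which of the three letters plays the $W$-part from block to block, not by rebalancing perturbation scales and exponents across the whole chain.
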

\begin{proof}
Use Lemma~\ref{delta} to define $\delta_i=\delta(\eps_i)$.
Put $\eta_i=\min\{\delta_{i-1}, \delta_{i+1}\}/2$, where $\delta_0=1$.
Again, use  Lemma~\ref{delta} to choose $\psi_i\in \SSS_3$.
For even $i\in \NN$ put $\Psi_i=\psi_i$, for the odd $i$
define $\Psi_i(Z,X,Y)=\psi_i(Y,X,Z)$.

Let $E_i$, $i\in \NN$ be closed infinite dimensional subspaces of $H$ so that
\begin{equation}\notag
\begin{split}
e_i, e_{i+1}&\in E_i, \\
\vee e_{i+1}&=E_iE_{i+1}=E_{i+1}E_i, \mbox{ and} \\
E_i&\perp E_j \mbox{ if } |i-j|\geq 2.
\end{split}
\end{equation}
Fix also closed subspaces $X_i\subset E_i$, so that
$e_i, e_{i+1}\in X_i$, and $\dim X_i=\dim( X_i^{\perp}\cap E_i)=\infty$.

By Lemma~\ref{delta}, there exist
  closed subspaces
$Y_i\subset E_i$ so that
\begin{equation}\label{est}
\bigl\|\psi_i(Z_i, X_i\vee X',Y_i\vee Y')e_i-e_{i+1}\bigr\|<4\eps,
\end{equation}
whenever $W_i=e_i\vee e_{i+1}$ and   $X',Y', Z_i$ are subspaces such that $X',Y'\subset E_i^{\perp}$ and $\|W_i-Z_iE_i\|<\delta_i$. Put $Y_0=\vee e_1$ and
$$
X=\bigvee_{i=1}^{\infty} X_i,\  Y=\bigvee_{k=0}^{\infty} Y_{2k},\ Z=\bigvee_{k=0}^{\infty} Y_{2k+1}.
$$
Then for any $i\in \NN$ we have $X=X_i\vee X_i'$ for a suitable $X_i'\perp E_i$.
Further we distinguish between two cases: $i$ is even and $i$ is odd.

If $i=2k$ is even, then also $Y=Y_i\vee Y_i'$ for a suitable $Y_i'\perp E_i$.
Since $ZE_i=(Y_{i-1}\vee Y_{i+1})E_i$, we have
\begin{equation}\notag
\begin{split}
\|W_i-ZE_i\|&=\|(X_{i-1}\vee X_{i+1})E_i-(Y_{i-1}\vee Y_{i+1})E_i\| \\
&=\|(X_{i-1}+ X_{i+1})E_i-(Y_{i-1}+ Y_{i+1})E_i\| \\
&=\|(X_{i-1}-Y_{i-1})E_i+(X_{i+1}- Y_{i+1})E_i\| \\
&\leq \|X_{i-1}-Y_{i-1}\|+\|X_{i+1}-Y_{i+1}\|\leq \eta_{i-1}+\eta_{i+1} \\
&\leq \min\{\delta_{i-2}+\delta_i\}/2+\min\{\delta_{i}+\delta_{i+2}\}/2\leq \delta_i.
\end{split}
\end{equation}
Hence by (\ref{est}),
$$
\bigl\|\psi_i(Z, X,Y)e_i-e_{i+1}\|<4\eps_i.
$$
If $i=2(k-1)$ is odd, then $Z=Y_i\vee Y_i'$ for a suitable $Y_i'\perp E_i$,
and similarly as above one can show that
$$
\|W_i-YE_i\|\leq \delta_i.
$$
Hence by (\ref{est}),
$$
\bigl\|\psi_i(Y, X,Z)e_i-e_{i+1}\|<4\eps_i.
$$

\end{proof}

Finally we prove the main result of this section, Theorem~2.6 of \cite{KM}.

\begin{theorem}\label{main}  There exists a sequence $k_1,k_2,\dots\in\{1,2,3\}$ with the following property.
If  $H$ is an infinite dimensional Hilbert space and $0\neq w_0\in H$, then
there exist three closed subspaces $X_1,X_2,X_3\subset H$,      such that the sequence of iterates $\{w_n\}_{n=1}^{\infty}$ defined by $w_n=X_{k_n}w_{n-1}$ does not converge in  norm. (Here $X_n$ denotes also the orthogonal projection onto $X_n$.) More precisely,
the spaces $X_1,X_2,X_3\subset H$ intersect only at the origin, and the sequence
$\{w_n\}$, although weakly convergent to the origin, stays in norm bounded
away from zero.
\end{theorem}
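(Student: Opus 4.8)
The plan is to feed Lemma~\ref{constr} a fixed summable sequence of errors, read off the universal index sequence from the resulting words, and then track the iterates of $w_0$ along an orthonormal sequence that begins at $w_0$.

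First I fix numbers $\eps_1,\eps_2,\dots>0$ with $S:=\sum_i\eps_i$ so small that $8S<\sqrt2-1$, and apply Lemma~\ref{constr} to obtain the words $\Psi_i\in\SSS_3$. The decisive point is that these words depend \emph{only} on the numbers $\eps_i$, and not on $H$, on $w_0$, or on the orthonormal sequence later fed to the lemma. Hence I may concatenate the letter sequences of $\Psi_1,\Psi_2,\dots$ (translating $a_1,a_2,a_3$ into the indices $1,2,3$ in order of application) into one sequence $k_1,k_2,\dots\in\{1,2,3\}$ that is now fixed once and for all; this is the sequence the theorem demands.

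Next, given an infinite dimensional $H$ and $0\ne w_0\in H$, I put $e_1=w_0/\|w_0\|$ and extend it to an orthonormal sequence $\{e_i\}_{i\ge1}$ whose orthogonal complement is still infinite dimensional; this is possible because $w_0^{\perp}$ is infinite dimensional, so the $e_{i+1}$ can be drawn from an orthonormal sequence in $w_0^{\perp}$ that leaves infinitely many directions unused. Applying Lemma~\ref{constr} to $\{e_i\}$ produces three subspaces, which I rename $X_1=Z,\ X_2=X,\ X_3=Y$, so that $W_i:=\Psi_i(X_1,X_2,X_3)$ satisfies $\|W_ie_i-e_{i+1}\|<4\eps_i$. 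Writing $P_0=\Id$ and $P_i=W_iP_{i-1}$ and using that each $W_i$ is a product of projections, hence a contraction, an immediate induction yields
\[
\|P_ie_1-e_{i+1}\|\le\|P_{i-1}e_1-e_i\|+4\eps_i\le 4\sum_{j\le i}\eps_j\le 4S .
\]
Setting $n_i=\sum_{j\le i}|\Psi_j|$, the iterate reached at the $i$th block boundary is exactly $w_{n_i}=\|w_0\|\,P_ie_1$, since $w_0=\|w_0\|e_1$. Because the $e_i$ are orthonormal,
\[
\|w_{n_i}-w_{n_{i+1}}\|\ge\|w_0\|\bigl(\|e_{i+1}-e_{i+2}\|-4\tsum_{j\le i}\eps_j-4\tsum_{j\le i+1}\eps_j\bigr)\ge\|w_0\|(\sqrt2-8S)>0,
\]
so $\{w_{n_i}\}$, and hence $\{w_n\}$, is not Cauchy and cannot converge in norm. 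Moreover every $X_{k_n}$ is a projection, so the norms $\|w_n\|$ are non-increasing; thus $\lim_n\|w_n\|$ exists and equals $\lim_i\|w_{n_i}\|\ge\|w_0\|(1-4S)>0$, which is the assertion that the iterates stay bounded away from zero. Finally, each of the three projections occurs infinitely often in $\{k_n\}$ (the letter $X$ appears in every block, while $Y$ and $Z$ exchange their roles with the parity of the block), so by the Amemiya--Ando theorem \cite{AA} the sequence converges weakly to the projection of $w_0$ onto $X_1\cap X_2\cap X_3$.

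I expect the only genuinely delicate point to be the verification that $X_1\cap X_2\cap X_3=\{0\}$, which simultaneously gives the weak limit $0$ and the statement that the three spaces meet only at the origin; everything else is bookkeeping resting on Lemma~\ref{constr} together with the elementary fact that products of projections are contractions and lower the norm. For this last step I would exploit the block structure built in the proof of Lemma~\ref{constr}: distant blocks are orthogonal ($E_i\perp E_j$ for $|i-j|\ge2$), adjacent blocks meet only along the line $\vee e_{i+1}$, and within each block $X_i\cap Y_i=\{0\}$ with $\|X_i-Y_i\|<1$. Decomposing a hypothetical nonzero common vector along these almost orthogonal blocks and invoking the within-block transversality should force it to vanish.
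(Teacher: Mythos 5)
Your proposal is correct and follows essentially the same route as the paper's own proof: fix a summable error sequence, extract the universal words $\Psi_i$ from Lemma~\ref{constr} (whose independence of $H$, $w_0$ and the orthonormal sequence is exactly what makes the index sequence universal), start the orthonormal sequence at $e_1=w_0/\|w_0\|$, and conclude divergence because the block iterates track an orthonormal sequence within a fixed small error. The one point you flag as delicate --- that $X_1\cap X_2\cap X_3=\{0\}$, hence the weak limit is the origin --- is likewise not argued in the paper's proof of this theorem, so you are not missing a step the paper supplies; your explicit non-Cauchy estimate and the monotonicity of $\|w_n\|$ are, if anything, slightly more detailed than the paper's version.
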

\begin{proof}
For $\eps_i=9^{-i}$ choose  $\Psi_i$ as in Lemma~\ref{constr}. Let $e_1=w_0/|w_0|$. Fix an orthonormal  sequence $\{e_i\}_{i=1}^{\infty}$ with an infinite dimensional complement.  Choose  closed subspaces
 $X,Y,Z\subset H$ according to  Lemma~\ref{constr} and call them $X_1, X_2, X_3$. Write for short $A_i=\Psi_i(Z,X,Y)$.
 Since $\|A_i\|\leq 1$, we have by induction
 \begin{equation}\notag
 \begin{split}
 \|A_n&A_{n-1}\dots A_1 e_1 -e_{n+1}\| \\
 &\leq \|A_nA_{n-1}\dots A_2(A_1 e_1 -e_2)\|+ \|A_nA_{n-1}\dots A_2e_2-e_{n+1}\| \\
 &\leq 4\eps_1+\|A_nA_{n-1}\dots A_3(A_2 e_2 -e_3)\|+ \|A_nA_{n-1}\dots A_3e_3-e_{n+1}\| \\
& \leq 4\eps_1+4\eps_2+\dots+\|A_n e_n-e_{n+1}\|\leq 4(9^{-1}+\dots+9^{-n})\leq 1/2
 \end{split}
 \end{equation}
 for all $n\in \NN$. Since $\{e_i\}$ is an orthonormal sequence, the norm-limit
 $\lim_{n\to \infty} A_nA_{n-1}\dots A_1 e_1$ does not exist.
\end{proof}

\section{All points  may have  diverging projectional iterates}\label{all}

In this section we strengthen the statement on existence of one badly behaved point to the  statement that {\em all points} (except for zero) in a  
Hilbert space $H$ can be badly behaved. Namely, in Theorem~\ref{mainmain}
we show that in every infinite dimensional Hilbert space $H$, there exist three orthogonal projections $X_1, X_2, X_3$ onto closed subspaces of $H$ such that for every $0\neq z_0\in H$ there exist $k_1, k_2,\dots \in \{1,2,3\}$ so that the sequence of iterates defined by $z_n= X_{k_n} z_{n-1}$ does not converge in norm.

If we allow five projections instead of just three, only two sequences of indices are needed to get non-convergence at every point. In Theorem~\ref{5pieces}
we show that there exists a sequence $k_1,k_2,\dots\in\{1,2,3\}$ with the following property. For every infinite dimensional Hilbert space $H$
 there are closed subspaces $X,Y,X_1,X_2,X_3$ of $H$ so that if $0\neq z\in H$, and $u_0=Xz$ and $v_0=XYz$, then at least one of
  the sequences of iterates $\{u_n\}_{n=1}^{\infty}$ or $\{v_n\}_{n=1}^{\infty}$ defined by $u_n=X_{k_n}u_{n-1}$ and
 $v_n=X_{k_n}v_{n-1}$   does not converge in  norm.
 
First we prove an auxiliary statement: in an infinite dimensional Hilbert space there always is an infinite dimensional subspace of points which are badly behaved with respect to projections on suitably chosen three subspaces.  

\begin{lemma}\label{infix} There exists a sequence $k_1,k_2,\dots\in\{1,2,3\}$ with the following property. Suppose  $X$ is an infinite dimensional subspace of a Hilbert space $H$ so that the Hilbert dimension of $X^\perp$ is at least as large as that of $X$.
Then there exist three closed subspaces $X_1,X_2,X_3\subset H$, so that for each
 $0\neq z_0\in X$    the sequence of iterates $\{z_n\}_{n=1}^{\infty}$ defined by $z_n=X_{k_n}z_{n-1}$ does not converge in  norm.
\end{lemma}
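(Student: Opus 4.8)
The plan is to reduce the ``all points'' statement to the single point Theorem~\ref{main} by running that construction in parallel over an orthogonal family of infinite dimensional blocks, one block for each basis direction of $X$. The decisive feature of Theorem~\ref{main} (via Lemma~\ref{constr}) is that the index sequence $k_1,k_2,\dots$ it produces is fixed once and for all by the choice $\eps_i=9^{-i}$, \emph{before} any Hilbert space or starting vector is specified; so the \emph{same} sequence can drive the construction in every block simultaneously. This fixed sequence is the universal $k_1,k_2,\dots$ required by the lemma.

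First I would fix an orthonormal basis $\{f_m\}_{m\in J}$ of $X$, so that $|J|=\dim X$. Using the hypothesis $\dim X^{\perp}\ge\dim X$, I would split $X^{\perp}=\bigoplus_{m\in J}W_m$ into $|J|$ mutually orthogonal separable infinite dimensional subspaces (possible since $|J|\cdot\aleph_0=|J|$), and set $H_m=(\vee f_m)\oplus W_m$. The $H_m$ are mutually orthogonal, each is separable and infinite dimensional, and $H=\bigoplus_{m\in J}H_m$. In $H_m$ I would pick an orthonormal sequence $\{e^{(m)}_i\}_{i\ge 1}$ with $e^{(m)}_1=f_m$ and infinite dimensional orthogonal complement in $H_m$, and apply Lemma~\ref{constr} (and hence Theorem~\ref{main}) to obtain three closed subspaces $X_1^{(m)},X_2^{(m)},X_3^{(m)}\subset H_m$ for which the iterates of $f_m=e^{(m)}_1$ under the fixed index sequence do not converge in norm. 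Finally I set $X_i=\bigoplus_{m\in J}X_i^{(m)}$ for $i=1,2,3$; since the summands lie in mutually orthogonal blocks these are closed, and each projection $X_i$ is block diagonal, i.e.\ it commutes with every orthogonal projection $P_m$ onto $H_m$.

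To finish, write $T_n=X_{k_n}\cdots X_{k_1}$ and take any $0\ne z_0\in X$. Some coordinate $c:=\langle z_0,f_{m_0}\rangle$ is nonzero, and since $z_0\in X\perp W_{m_0}$ one has $P_{m_0}z_0=c\,f_{m_0}$. Because $T_n$ commutes with $P_{m_0}$,
\[
P_{m_0}T_nz_0=T_nP_{m_0}z_0=c\,T_nf_{m_0},
\]
and on $H_{m_0}$ the vector $T_nf_{m_0}$ is precisely the $n$-th iterate of $f_{m_0}$ in block $m_0$. If $\{T_nz_0\}$ converged in norm, then applying the bounded operator $P_{m_0}$ would force $\{c\,T_nf_{m_0}\}$, hence $\{T_nf_{m_0}\}$ (as $c\ne 0$), to converge in norm, contradicting the choice of $X_1^{(m_0)},X_2^{(m_0)},X_3^{(m_0)}$. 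Thus $\{z_n\}=\{T_nz_0\}$ does not converge in norm for any $0\ne z_0\in X$.

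I expect the main point requiring care to be the bookkeeping that makes the single block construction universal: verifying that the words $\Psi_i$, and hence the index sequence, genuinely depend only on the $\eps_i$ and not on the ambient block (so that one sequence serves all $H_m$ at once), and arranging the cardinalities so that $|J|$ mutually orthogonal infinite dimensional blocks $H_m$ with $f_m\in H_m$ actually fit inside $H$ — this is exactly where the hypothesis $\dim X^{\perp}\ge\dim X$ is used. The analytic content, by contrast, is inherited wholesale from Theorem~\ref{main} through the elementary observation that a block diagonal product of projections can converge only if it converges in each block.
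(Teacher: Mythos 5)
Your proposal is correct and follows essentially the same route as the paper: an orthogonal block decomposition of $H$ indexed by an orthonormal basis of $X$ (using $\dim X^{\perp}\ge\dim X$ to fit the infinite dimensional blocks), with Theorem~\ref{main} — whose index sequence is indeed universal, fixed by $\eps_i=9^{-i}$ before any space is chosen — applied in each block and $X_1,X_2,X_3$ obtained as closed joins of the blockwise subspaces. The only difference is the concluding step: the paper deduces divergence from the lower bound $\|z_n\|\ge |t|\,\|w_n\|$ combined with weak convergence of $\{z_n\}$ to zero, whereas you apply the continuous block projection $P_{m_0}$, which commutes with the iteration, to reduce directly to the divergence of the iterates of $f_{m_0}$; both arguments are valid.
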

\begin{proof} Let $k_1,k_2,\dots\in\{1,2,3\}$ be as in Theorem~\ref{main}. Let $\{w^{\lambda}\}_{\lambda\in \Lambda}$ be an
orthonormal basis of $X$. Let $F_{\lambda}$, $\lambda \in \Lambda$ be pairwise orthogonal closed infinite dimensional subspaces of $X^\perp$. Define $E_{\lambda}=w_{\lambda}\vee F_{\lambda}$.
In each $E_{\lambda}$ choose closed subspaces $X^{\lambda}_1,X^{\lambda}_2,X^{\lambda}_3\subset E_{\lambda}$  as in Theorem~\ref{main}.
For $j\in \{1,2,3\}$ define
$X_j=\bigvee_{\lambda\in \Lambda} X^{\lambda}_j$.

Let $0\neq z_0=\sum_{\lambda\in \Lambda}t_{\lambda}w^{\lambda} \in X$ be given. Choose $\alpha\in \Lambda$ so that
$t_{\alpha}\neq 0$ and write $z_0=tw_0+u_0$, where $t=t_\alpha$, and $w_0=w^{\alpha}$. Since the spaces $E_\lambda$ are pairwise orthogonal, the iterates of the point
$z_0$ can be easily expressed using the iterates of $w_0$ and $u_0$, namely $z_n=tw_n+u_n$. Moreover,
$$
\|z_n\|^2=t^2\|w_n\|^2+\|u_n\|^2\geq t^2\|w_n\|^2,
$$
since $w_n\in E_{\alpha}$ and $u_n\in E_{\alpha}^\perp$. Hence, the norm of the sequence $\{z_n\}$
stays bounded away from zero. As $\{z_n\}$ converges weakly to zero by \cite{AA},
it does not converge in norm.
\end{proof}

Next we will construct five subspaces of an infinite dimensional Hilbert space $H$, 
so that for each $0\neq x\in H$ a certain sequence of iterates of projections of $x$ on these five spaces does not converge in norm. We will use the following elementary observation.

\begin{lemma}\label{nonzero}
Let $H$ be a Hilbert space and let $X$   be a closed  subspace of $H$ so that  the Hilbert dimensions of $X$ and $X^{\perp}$ are the same.  Then there exists a closed subspace $Y$ of $H$ so that $Xz\neq 0$ or $XYz\neq 0$ for each $0\neq z\in H$. 
\end{lemma}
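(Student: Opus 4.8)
The plan is to reduce the claim to points of $X^\perp$ and then to build $Y$ as the ``graph'' of an isometry that tilts every direction of $X^\perp$ into $X$ at a fixed angle. First I would observe that the conclusion ``$Xz\neq 0$ or $XYz\neq 0$'' only needs to be checked for $0\neq z\in X^\perp$: if $Xz\neq 0$ there is nothing to prove, and the remaining nonzero vectors are exactly those with $Xz=0$, i.e.\ the nonzero vectors of $X^\perp$. Thus it suffices to produce a closed subspace $Y$ with the property that $XYz\neq 0$ for every $0\neq z\in X^\perp$; equivalently, that $Yz\notin X^\perp$ for every such $z$.

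For the construction I would use the hypothesis that $X$ and $X^\perp$ have the same Hilbert dimension to fix orthonormal bases $\{e_\lambda\}_{\lambda\in\Lambda}$ of $X$ and $\{f_\lambda\}_{\lambda\in\Lambda}$ of $X^\perp$ indexed by one common set $\Lambda$. I then set $g_\lambda=\frac{1}{\sqrt2}(e_\lambda+f_\lambda)$ and let $Y$ be the closed linear span of $\{g_\lambda\}_{\lambda\in\Lambda}$. A one-line check, $\langle g_\lambda,g_\mu\rangle=\frac12(\delta_{\lambda\mu}+\delta_{\lambda\mu})=\delta_{\lambda\mu}$, shows the $g_\lambda$ form an orthonormal system, so $Y$ is a genuine closed subspace with orthonormal basis $\{g_\lambda\}$. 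Conceptually, $Y=\{\frac{1}{\sqrt2}(w+Vw):w\in X^\perp\}$ is the graph, sitting at $45^\circ$, of the isometry $V\colon X^\perp\to X$ determined by $Vf_\lambda=e_\lambda$, and the equal-dimension hypothesis is precisely what makes $V$ onto.

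The key step is the explicit evaluation of $XY$ on $X^\perp$. For $z=\sum_\lambda c_\lambda f_\lambda\in X^\perp$ one has $\langle z,g_\lambda\rangle=\frac{1}{\sqrt2}c_\lambda$, hence $Yz=\sum_\lambda\frac{c_\lambda}{\sqrt2}g_\lambda=\sum_\lambda\frac{c_\lambda}{2}(e_\lambda+f_\lambda)$, and projecting onto $X$ gives $XYz=\sum_\lambda\frac{c_\lambda}{2}e_\lambda$. Since the $e_\lambda$ are orthonormal, $XYz=0$ forces every $c_\lambda=0$, i.e.\ $z=0$. Therefore $XYz\neq 0$ for each nonzero $z\in X^\perp$, which together with the reduction of the first paragraph establishes the claim.

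I do not expect a genuine obstacle here, as the statement is elementary; the one point requiring care is the reduction itself, namely recognizing that the only nonzero vectors escaping the condition $Xz\neq 0$ lie in $X^\perp$, so the whole problem collapses to arranging that $Y$ sends no nonzero vector of $X^\perp$ back into $X^\perp$. The equal-dimension hypothesis enters exactly once, only to let the two bases share an index set; without it the isometry $V$, and hence the tilted graph $Y$, could fail to reach all of $X^\perp$ and a residual direction of $X^\perp$ could be annihilated by $XY$.
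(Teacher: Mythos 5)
Your proposal is correct and follows essentially the same route as the paper: take orthonormal bases $\{e_\lambda\}$ of $X$ and $\{f_\lambda\}$ of $X^\perp$ over a common index set, let $Y$ be the closed span of the vectors $e_\lambda+f_\lambda$, and compute that $XYz=\frac12\sum_\lambda\langle f_\lambda,z\rangle e_\lambda\neq 0$ whenever $0\neq z\in X^\perp$. The only cosmetic difference is your normalization of the spanning vectors and the explicit framing of $Y$ as the graph of an isometry, neither of which changes the argument.
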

\begin{proof}
Let $\{e_{\lambda}\}_{\lambda\in \Lambda}$ and $\{f_{\lambda}\}_{\lambda\in \Lambda}$ be the orthonormal bases of $X$ and $X^{\perp}$ respectively.
Then $\{e_{\lambda}+f_{\lambda}\}_{\lambda\in \Lambda}$ is an orthogonal sequence;
let $Y$ be its closed linear span. Suppose $0\neq z\in H$ is so that $Xz=0$.
Then $z=\sum\langle f_{\lambda},z\rangle f_{\lambda}$  with some $\langle f_{\alpha},z\rangle\neq 0$. Hence
$$
XYz=X\left(\sum_{\lambda\in \Lambda} \langle f_{\lambda},z\rangle \frac {e_\lambda+f_\lambda}2\right)=\frac 12
\sum_{\lambda\in \Lambda}\langle f_{\lambda},z\rangle e_\lambda\neq 0.
$$
\end{proof}

\begin{theorem}\label{5pieces}
Let $H$ be an infinite dimensional Hilbert space. Then there exist five closed subspaces
of $H$ so that for every $0\neq z\in H$ some sequence of iterates of $z$ defined by the orthogonal projections on those subspaces does not converge in norm.

More precisely, there exists a sequence $k_1,k_2,\dots\in\{1,2,3\}$ with the following property. Every infinite dimensional Hilbert space $H$ has
 closed subspaces $X,Y,X_1,X_2,X_3$  so that if $0\neq z\in H$, and $u_0=Xz$, $v_0=XYz$, then at least one of
  the sequences of iterates $\{u_n\}_{n=1}^{\infty}$ or $\{v_n\}_{n=1}^{\infty}$ defined by $u_n=X_{k_n}u_{n-1}$,
 $v_n=X_{k_n}v_{n-1}$   does not converge in  norm.
\end{theorem}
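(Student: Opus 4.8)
The plan is to assemble the theorem directly from the three results already established, using the key observation that both auxiliary lemmas of this section can be applied to one and the same subspace $X$. Concretely, Lemma~\ref{nonzero} will guarantee that at least one of $u_0=Xz$, $v_0=XYz$ is nonzero, and Lemma~\ref{infix} (and through it the universal index sequence of Theorem~\ref{main}) will guarantee that whichever of these two starting vectors is nonzero produces a norm-divergent orbit. Since $u_0$ and $v_0$ both lie in $X$ (the outermost projection in each is $X$), the conclusion will follow immediately.

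First I would fix a decomposition $H=X\oplus X^{\perp}$ in which the Hilbert dimensions of $X$ and $X^{\perp}$ coincide. This is possible in any infinite dimensional Hilbert space: if $\kappa$ denotes the Hilbert dimension of $H$, one splits an orthonormal basis into two pieces each of cardinality $\kappa$, using $\kappa+\kappa=\kappa$ for infinite $\kappa$, so that $\dim X=\dim X^{\perp}=\kappa$. The point of this balanced splitting is that it simultaneously satisfies the dimension hypothesis of Lemma~\ref{nonzero} (equal dimensions of $X$ and $X^{\perp}$) and that of Lemma~\ref{infix} (the dimension of $X^{\perp}$ is at least that of $X$) for this single $X$.

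Next I would let $k_1,k_2,\dots\in\{1,2,3\}$ be the sequence furnished by Lemma~\ref{infix}, which does not depend on $H$ or on $X$; this is the universal sequence claimed in the theorem. Applying Lemma~\ref{infix} to $X$ yields three closed subspaces $X_1,X_2,X_3$ such that every nonzero starting vector in $X$ has norm-divergent iterates along $k_1,k_2,\dots$, and applying Lemma~\ref{nonzero} to $X$ yields a closed subspace $Y$ with $Xz\neq 0$ or $XYz\neq 0$ for every $0\neq z\in H$. The five subspaces demanded by the theorem are then $X,Y,X_1,X_2,X_3$.

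Finally I would verify the conclusion. Given $0\neq z\in H$, set $u_0=Xz$ and $v_0=XYz$; both lie in $X$. By Lemma~\ref{nonzero} at least one of $u_0,v_0$ is nonzero, and for whichever is nonzero Lemma~\ref{infix} shows that the corresponding sequence $u_n=X_{k_n}u_{n-1}$ or $v_n=X_{k_n}v_{n-1}$ fails to converge in norm. I do not expect a genuine obstacle: the argument is a straightforward composition of the stated lemmas, and the only step requiring care is arranging that a \emph{single} $X$ meets the dimension requirements of both Lemma~\ref{nonzero} and Lemma~\ref{infix}, which is precisely why the balanced decomposition $\dim X=\dim X^{\perp}$ is chosen at the outset.
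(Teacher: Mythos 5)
Your proposal is correct and follows essentially the same route as the paper's own proof: choose $X$ with $\dim X=\dim X^{\perp}$, take the universal sequence and $X_1,X_2,X_3$ from Lemma~\ref{infix}, and $Y$ from Lemma~\ref{nonzero}. You merely spell out the verification that the paper leaves implicit.
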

\begin{proof}
Let $k_1,k_2,\dots\in\{1,2,3\}$ be as in Lemma~\ref{infix}.
Choose a closed subspace $X$ of $H$ so that  the Hilbert dimensions of $X$ and $X^{\perp}$ are the same. Choose $X_1, X_2, X_3$ according to Lemma~\ref{infix}
and $Y$ according to Lemma~\ref{nonzero}.
\end{proof}

A  stronger result with only three subspaces  can be obtained in a non-constructive  way.
First we will show that if the iterates of three projections  of a certain point 
do not converge in norm, then there is a closed infinite dimensional subspace of such points. We recall a couple of elementary facts about projections we will need in the proof.

Let $H$ be a Hilbert space. A continuous linear mapping $P:H\to H$ is called a projection if $P^2=P$. It is an orthogonal projection if the range and the kernel of $P$ are orthogonal. It is easy to see that a projection is orthogonal if and only if it is self adjoint.

Let $X,Z\subset H$ be closed subspaces so that $Xz\in Z$ for each $z\in Z$. Then $XZ=ZX$ is the orthogonal projection onto $X\cap Z$.

Conversely, assume that the orthogonal projections onto $X$ and $Z$ commute. Then again, $XZ$ is the orthogonal projection onto $X\cap Z$.

\begin{lemma}\label{stronger}
Let $H$ be a Hilbert space and let $X_1, X_2, X_3\subset H$ be three of its closed subspaces.
Suppose $w_0\in H$ and $k_1,k_2,\dots\in\{1,2,3\}$ are so that the sequence defined by $w_{n}=X_{k_n}w_{n-1}$ does not converge in norm. Then there exists a closed infinite dimensional subspace $L$ of $H$ and closed subspaces $Y_1, Y_2, Y_3\subset L$ so that 
  for every $0\neq u_0\in L$ there exist $j_1,j_2,\dots\in\{1,2,3\}$  so that the sequence defined by $u_{n}=Y_{j_n}u_{n-1}$ does {\em not} converge in norm.
\end{lemma}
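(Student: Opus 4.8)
The plan is to locate, inside $H$ itself, the largest subspace on which \emph{every} projectional iteration converges, and to take $L$ to be its orthogonal complement. For an index sequence $s=(s_1,s_2,\dots)\in\{1,2,3\}^{\NN}$ write $T^s_n=X_{s_n}\cdots X_{s_1}$, and set
$$
C_s=\{z\in H:\ (T^s_nz)_n\ \text{converges in norm}\},\qquad G=\bigcap_{s}C_s .
$$
First I would check that each $C_s$ is a closed linear subspace: linearity is clear, and closedness follows from $\|T^s_n\|\le 1$ via the standard equicontinuity estimate $\|T^s_nz-T^s_mz\|\le 2\|z-z_k\|+\|T^s_nz_k-T^s_mz_k\|$ for $z_k\to z$ with $z_k\in C_s$. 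Hence $G$, the set of \emph{good} points (those whose iterates converge for every choice of indices), is a closed subspace, and by hypothesis $w_0\notin C_{(k_1,k_2,\dots)}$, so $w_0\notin G$.

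The key structural observation is that $G$ is invariant under each $X_i$. Indeed, for $z\in G$ and any $s$, prepending $i$ gives the sequence $s'=(i,s_1,s_2,\dots)$, and $T^s_n(X_iz)=T^{s'}_{n+1}z$ is a tail of the convergent sequence $(T^{s'}_mz)_m$; therefore $X_iz\in C_s$ for every $s$, i.e.\ $X_iz\in G$. Since each $X_i$ is self-adjoint, the orthogonal complement $L:=H\ominus G$ is then also invariant under every $X_i$. Now define $Y_i:=X_i\cap L$. Because $L$ is $X_i$-invariant, the recalled fact (invariance forces $X_iL=LX_i$ to be the orthogonal projection onto $X_i\cap L\subseteq L$) shows that for $u\in L$ the $Y_i$-iterates inside $L$ coincide with the $X_i$-iterates; in particular they never leave $L$.

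With this set-up the verification is immediate. Every nonzero $u\in L$ satisfies $u\notin G$ (as $L\cap G=\{0\}$), so $u\notin C_s$ for some $s=(j_1,j_2,\dots)$; that $s$ witnesses non-convergence of $u_n=Y_{j_n}u_{n-1}$, the iterates remaining in $L$ by invariance. Moreover $L\neq\{0\}$, since otherwise $w_0\in G$. The step I expect to be the main obstacle—really the only non-formal point—is showing $\dim L=\infty$. Here I would argue by contradiction using Amemiya--Ando: if $L$ were finite dimensional, then on $L$ weak convergence upgrades to norm convergence, so for every $u\in L$ and every index sequence the ($L$-valued, weakly convergent \cite{AA}) iterates would converge in norm; this would make every point of $L$ good, forcing $L\subseteq G$ and hence $L=\{0\}$, contradicting $w_0\notin G$. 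Therefore $L$ is an infinite dimensional closed subspace with the required three subspaces $Y_1,Y_2,Y_3\subseteq L$.
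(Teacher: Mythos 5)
Your proof is correct and follows essentially the same route as the paper's: the paper defines the same subspace of ``good'' points (called $Z$ there, your $G$), proves its closedness and $X_i$-invariance in the same way, takes $L=Z^{\perp}$ with $Y_i=X_i\cap L$, and concludes infinite-dimensionality of $L$ from the finite-dimensional convergence theorem (citing Pr\'ager, where you derive it from Amemiya--Ando, an equivalence the introduction itself notes).
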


\begin{proof} 
Let $Z$ be the set of all $z_0\in H$ so that for every sequence $j_1,j_2,\dots\in\{1,2,3\}$  the sequence defined by $z_{n}=X_{j_n}z_{n-1}$ does  converge in norm.
Clearly,  $Z$ is a linear subspace; we observe  that it is also closed. Indeed, assume that $\lim_{i\to \infty} y_0^i=y_0$ for some $y_0^i\in Z$ and that $j_1,j_2,\dots\in\{1,2,3\}$. Define recursively the sequences $y_{n}=X_{j_n}y_{n-1}$ and $y^m_{n}=X_{j_n}y^m_{n-1}$ for $m\in \NN$.
For each $\eps>0$ choose $m\in \NN$ so that $\|y_0^m-y_0\|<\eps$ and $N\in \NN$ so that 
$\|y_k^m-y_l^m\|<\eps$ for all $k,l>N$.
Then $\|y_k-y_l\|\leq \|y_k-y_k^m\|+\|y_k^m-y_l^m\|+\|y_l^m-y_l\|<3\eps$, since the composition of projections is $1$-Lipschitz. Hence the sequence $\{y_{n}\}_{n=0}^{\infty}$ is Cauchy and therefore convergent, and $y_0\in Z$.

From the definition of $Z$ it follows that if $z\in Z$, then also  $X_i z \in Z$ for $i\in \{1,2,3\}$. Thus $X_iZ=ZX_i$ is an orthogonal   projection onto $X_i\cap Z$. Define $L=Z^{\perp}$. Then $L= Id_H -Z$ for the orthogonal projection onto $L$, hence $L$ commutes with $X_i$ for  $i\in \{1,2,3\}$. Therefore 
$LX_i=X_iL$ is the orthogonal projection onto $Y_i=L\cap X_i$.

Assume   $0\neq u_0\in L$. Since $L\cap Z=\{0\}$,  there exist $j_1,j_2,\dots\in\{1,2,3\}$  so that the sequence defined by 
$$
u_{n}=X_{j_n}u_{n-1}=Y_{j_n}u_{n-1}
$$
 does  not converge in norm, hence $L$ is infinite dimensional by \cite{Pr}.\end{proof}

We are now ready  to prove the main theorem of our paper.

\begin{theorem}\label{mainmain}
 Every infinite dimensional Hilbert space $H$ contains three closed subspaces $X_1, X_2, X_3$ with the following property. For every $0\neq w_0\in H$ there is a sequence   $k_1, k_2,\dots \in \{1,2,3\}$ so that the sequence of iterates defined by $w_n= X_{k_n} w_{n-1}$ does not converge in norm.
\end{theorem}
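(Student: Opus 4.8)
The plan is to split $H$ orthogonally into infinitely many pieces on each of which \emph{all} nonzero points misbehave, and then assemble $X_1,X_2,X_3$ blockwise. The engine is the combination of Theorem~\ref{main} and Lemma~\ref{stronger}: applied inside any infinite dimensional Hilbert space $K$, they produce an infinite dimensional closed subspace $L\subseteq K$ and three subspaces $Y_1,Y_2,Y_3\subseteq L$ so that every nonzero point of $L$ is bad and, since $Y_j\subseteq L$, the iterates never leave $L$. Call such $(L;Y_1,Y_2,Y_3)$ a \emph{confined block}. The first thing I would record is that confinement makes blocks combine for free. If $\{L_\beta\}$ are pairwise orthogonal confined blocks and $X_j=\bigvee_\beta Y_j^\beta$, then for $z=\sum_\beta z_\beta$ the iterates split orthogonally, $z_n=\sum_\beta (z_\beta)_n$, so $\|z_n-z_m\|^2=\sum_\beta\|(z_\beta)_n-(z_\beta)_m\|^2\ge\|(z_{\beta_0})_n-(z_{\beta_0})_m\|^2$. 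Picking $\beta_0$ with $z_{\beta_0}\ne 0$ and the index sequence that makes $z_{\beta_0}$ bad in $L_{\beta_0}$, the right side is not Cauchy, hence neither is $\{z_n\}$. Thus every nonzero point of $M:=\bigoplus_\beta L_\beta$ is bad for $X_1,X_2,X_3$, and the task reduces to covering $H$ by orthogonal confined blocks.

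Next I would invoke Zorn's lemma. Order the orthogonal families of confined blocks by inclusion; the union of a chain is again such a family, so a maximal one exists. Let $M$ be the closed span of a maximal family. If $M^\perp$ were infinite dimensional, running the engine inside $M^\perp$ would yield a confined block orthogonal to all the others, contradicting maximality; hence $M^\perp$ is finite dimensional. If $M^\perp=\{0\}$ we are done, since by the previous paragraph every nonzero point of $H=M$ is then bad. Note that by Pr\'ager \cite{Pr} a finite dimensional space supports no bad point, which is exactly why a nonzero finite dimensional remainder cannot be salvaged as one more self-contained block; this is the genuine difficulty.

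The heart of the matter is therefore the elimination of a nonzero finite dimensional remainder $F:=M^\perp$, which I would carry out by enlarging a single block. Fix one block $L_0$ and split $L_0=S\oplus T$ into two infinite dimensional pieces. Inside the infinite dimensional space $K:=F\oplus L_0$ I would use Lemma~\ref{infix} (equivalently, the blockwise construction behind Lemma~\ref{constr}) to make \emph{all} of $F\oplus S$ bad while deliberately leaving an infinite dimensional slice $T'\subseteq T$ of the auxiliary directions untouched; such untouched directions are annihilated by each of the three projections and so constitute an infinite dimensional set of trivially convergent points. Feeding a bad point $f\in F$ into Lemma~\ref{stronger} then reconfines the construction to a block $L^{\ast}\subseteq K$ whose complement inside $K$ contains $T'$ and is thus infinite dimensional, while $F\subseteq L^{\ast}$. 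Replacing $L_0$ by $L^{\ast}$ now covers $F$ but leaves an infinite dimensional complement, which the engine can populate with a further block, contradicting maximality unless $F=\{0\}$; hence $M=H$. I expect the main obstacle to lie precisely in this last step: guaranteeing that the reconfined block $L^{\ast}$ genuinely contains the prescribed directions $F$ while still leaving an infinite dimensional complement. Concretely, this is the requirement that the good set produced by Lemma~\ref{stronger} be taken both orthogonal to $F$ and infinite dimensional, which one arranges by the orthogonal blockwise structure of Lemma~\ref{infix} (leaving infinitely many auxiliary directions as honest zeros) together with routing the vectors of $F$ through the staircases of Lemma~\ref{constr} so that they are separated from the good set; organizing this so that a suitable maximality (formulated on the covered subspace rather than on the family) is contradicted is where the non-constructive bookkeeping does its work.
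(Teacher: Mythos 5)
Your first two reductions are sound and match the paper's: orthogonal ``confined blocks'' combine for free (this is exactly how the paper passes from the separable to the non-separable case), and Theorem~\ref{main} together with Lemma~\ref{stronger} produces one confined block inside any infinite dimensional subspace. But you have missed the one observation that makes the whole Zorn/remainder apparatus unnecessary: the subspace $L$ produced by Lemma~\ref{stronger} inside a \emph{separable} $H$ is an infinite dimensional separable Hilbert space, hence \emph{isometric to $H$ itself}. The paper simply transports the subspaces $Y_1,Y_2,Y_3\subset L$ through this isometry and is done with the separable case in one line; the non-separable case is then your first paragraph applied to a decomposition of $H$ into separable pieces, each of which is covered \emph{entirely}, so no remainder ever appears.

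Without that observation, your maximality argument does not close, and the gap is exactly where you suspect it is. When you replace $L_0$ by the reconfined block $L^{\ast}$, the new family $\{L^{\ast}\}\cup\{L_\beta\}_{\beta\neq 0}$ is not an enlargement of the maximal one: its span is $H\ominus Z$, where $Z$ is the good set of the construction on $K=F\oplus L_0$, and since $Z\supseteq T'$ is infinite dimensional while $F$ is finite dimensional, the new span neither contains nor is contained in the old span $H\ominus F$. So no formulation of maximality (on families or on covered subspaces) is contradicted, and the fallback of ``populating $Z$ with a further block'' just restarts the problem, with no termination guarantee for the resulting transfinite merge-and-add loop. (A secondary inaccuracy: $F\cap Z=\{0\}$ does not give $F\subseteq L^{\ast}=Z^{\perp}$; what is true, and what you actually need, is that every nonzero $f\in F$ has a nonzero $L^{\ast}$-component $L^{\ast}f$ whose iterates under $Y^{\ast}_j=L^{\ast}X_j$ coincide with the iterates of $f$ and diverge. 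That part is repairable; the maximality step is not, as written.) Your correct instinct that a finite dimensional remainder is unsalvageable as a block, by \cite{Pr}, is precisely the reason the paper routes around the issue with the isometry rather than through it.
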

\begin{proof}
 Assume first $H$ is separable. According to Theorem~\ref{main}, there exist  $w_0\in H$ and $k_1,k_2,\dots\in\{1,2,3\}$ so that the sequence defined by $w_{n}=X_{k_n}w_{n-1}$ does not converge in norm. Let $L$ be the infinite dimensional subspace of $H$ obtained in Lemma~\ref{stronger}. Since $H$ is separable, $L$ and $H$ are isometric, and the required subspaces of $L$, alias $H$, exist.
 
For a  non-separable Hilbert space  $H$  we choose pairwise orthogonal, separable, infinite dimensional, closed spaces $H_{\lambda}\subset H$ so that 
$$
H=\bigvee_{\lambda\in \Lambda}H_{\lambda}.
$$
In each $H_{\lambda}$ we choose closed subspaces $X^{\lambda}_1, X^{\lambda}_2,X^{\lambda}_3$ according to the separable case of the theorem. We define 
$$
X_i=\bigvee_{\lambda\in \Lambda}X_i^{\lambda}, \  i\in \{1,2,3\}.
$$  
If $0\neq w_0\in H$, then $w_0=\sum_{\lambda\in \Lambda} w^{\lambda}_0$ for some $w_\lambda\in H_{\lambda}$. Since  $0\neq w^{\alpha}_0$ for some $\alpha\in \Lambda$,
there exist $k_1, k_2,\dots \in \{1,2,3\}$ so that 
the sequence of iterates defined by $w^{\alpha}_n=X^{\alpha}_{k_n} w^{\alpha}_{n-1}=X_{k_n} w^{\alpha}_{n-1}$ does not converge in norm. As 
$$
w_n= X_{k_n} w_{n-1}=\sum_{\lambda\in \Lambda}w^{\lambda}_n,
$$
$\|w_n-w_m\|\geq \|w^{\alpha}_n-w^{\alpha}_m\|$ and the sequence $\{w_n\}_{n=0}^{\infty}$ does not converge either.
\end{proof}

\subsection*{Acknowledgements}

Eva Kopeck\'a was partially supported by  Grant FWF-P23628-N18.
Adam Paszkiewicz was partially supported by Grant UMO-2011/01/B/ST1/03994.

\end{document}

\section*{Appendix: Projections}

Let $H$ be a Hilbert space. A continuous linear mapping $P:H\to H$ is called a projection if $P^2=P$. It is an orthogonal projection, if the range and the kernel of $P$ are orthogonal, that is, $\langle Pu, v-Pv\rangle= \langle u-Pu, Pv\rangle=0$ for all $u,v\in H$. It is easy to see, that a projection is orthogonal if and only if it is self adjoint.

\begin{lemma}
Let $X,Z\subset H$ be closed subspaces so that $Xz\in Z$ for each $z\in Z$. Then $XZ=ZX$ is the orthogonal projection onto $X\cap Z$.
\end{lemma}
\begin{proof}
If $v\in H$, then $XZv\in X\cap Z$ and
$$
XZ(XZv)=XXZv=XZv,
$$
hence $XZ$ is a projection. Suppose $w\in X\cap Z$. Then $XZw=Xw=w$, hence $XZ$
is a projection onto $X\cap Z$. For any $v\in H$
$$
\langle XZv-v,w\rangle=\langle (XZv-Zv)+(Zv-v),w\rangle=0,
$$
hence $XZ$ is an orthogonal projection onto $X\cap Z$, and it is self adjoint.
Then for any $u,v\in H$
$$
\langle XZu,v\rangle=\langle u,XZv\rangle=\langle Xu,Zv\rangle=\langle ZXu,v\rangle,
$$
since $X$ and $Z$ are also self-adjoint. Hence $XZ$=$ZX$.
\end{proof}

\begin{lemma}
Let $X,Z\subset H$ be closed subspaces so that the orthogonal projections onto $X$ and $Z$ commute. Then $XZ$ is the orthogonal projection onto $X\cap Z$.
\end{lemma}
\begin{proof}
Since 
$$
(XZ)^2=XZXZ=XZZX=XZX=XXZ=XZ,
$$
$XZ$ is a projection into $X\cap Z$. Suppose $w\in X\cap Z$. Then $XZw=Xw=w$, hence $XZ$
is a projection onto $X\cap Z$.

Since $X$ and $Z$ are self adjoint, for any $u,v\in H$
$$
\langle XZu,v\rangle=\langle Zu,Xv\rangle=\langle u,ZXv\rangle=\langle u,XZv\rangle,
$$
hence $XZ$ is also self adjoint a it is therefore an orthogonal projection onto $X\cap Z$. 
\end{proof}
\end{document}